\newtheorem{thm}{Theorem}
\newtheorem{lem}[thm]{Lemma}
\newtheorem{cor}[thm]{Corollary}
\newtheorem{conj}[thm]{Conjecture}
\newtheorem{openp}{Question}
\newcommand{\wyt}{\textsc{wythoff}\xspace}
\newcommand{\nim}{\textsc{nim}\xspace}
\newcommand{\twyst}{\textsc{twyst-off}\xspace}
\newcommand{\nimmy}{\textsc{frozen twyst-off}\xspace}
\newcommand{\bigtwyst}{\textsc{heavy handed twyst-off}\xspace}
\newcommand{\untangle}{\textsc{untangle}\xspace}
\newcommand{\chomp}{\textsc{chomp}\xspace}
\newcommand{\be}{\begin{enumerate}}
\newcommand{\ee}{\end{enumerate}}
\begin{document}

\title{A New Twist on Wythoff's Game}
\author{Alex Meadows}
\address{Department of Mathematics and Computer Science\\ St. Mary's College of Maryland\\ St. Mary's City, MD 20686}
\email{ammeadows@smcm.edu}
\author{Brad Putman}
\email{bwputman@berkeley.edu}
\begin{abstract}
Wythoff's Game is a game for two players playing alternately on two stacks of tiles.  On her turn, a player can either remove a positive number of tiles from one stack, or remove an equal positive number of tiles from both stacks.  The last player to move legally wins the game.  We propose and study a new extension of this game to more than two stacks, which we call {Twyst-off}, inspired by the Reidemeister moves of knot theory.  From an ordered sequence of stacks of tiles, a player may either remove a positive number of tiles from one of the two end stacks, or remove the same positive number of tiles from two consecutive stacks.  Whenever an interior stack is reduced to 0, the two neighboring stacks are combined.  In this paper, we prove several results about those {Twyst-off} positions that can be won by the second player (these are called $P$ positions).   We prove an existence and uniqueness result that makes the visualization of data on three-stack $P$ positions possible.  This shows that many such positions are symmetric, like the easy general examples $(a,a,a)$ and $(a,a+1,a)$.  The main result establishes tight bounds on those three-stack $P$ positions that are not symmetric.  We go on to prove one general structural result for positions with an arbitrary number of stacks.  We also prove facts about the game when allowing stacks of infinite size, including classifying all positions $(\infty, \infty, \ldots, \infty)$ for up to six stacks.
\end{abstract}

\maketitle

\thispagestyle{empty}

\section{Introducing \twyst}

Wythoff's Game, which we abbreviate  \wyt, was invented by Wythoff in 1907 \cite{wythoff} as a modification of Bouton's classic game \nim \cite{bouton}.  Both articles include solutions of their proposed games. For a  nice problem-based introduction to both games, see the article on two games with matchsticks in  \cite{kvant}.   There is also a beautiful exposition of \wyt by Martin Gardner \cite{gardner}.  Both of these expository articles claim that the game might have been played in China long before 1907 under the name ``{Tsyan-shidzi}'', or ``{Choosing Stones}.''

\wyt is a game for two players playing alternately on two stacks of tiles.  On her turn, a player can either remove a positive  number of tiles from one stack, or remove an equal positive number of tiles from both stacks.  The last player to move legally wins the game.  The classical analysis of impartial combinatorial games (\cite{WW1}, \cite {guy}) partitions all of the game positions $(a,b)$ into \textit{$P$ positions} (from which the second or previous player can force a win) and \textit{$N$ positions} (from which the first or next player can force a win).   Every $N$ position can move to a $P$ position, but no $P$ position can move to another $P$ position.  
For example, $(0,0)$ is a $P$ position since the next player cannot move legally.  The positions $(n,0)$, $(0,n)$, and $(n,n)$ for $n>0$ are all $N$  since the next player can move to $(0,0)$.  The position $(1,2)$ is  $P$  since every \textit{option} is $N$.
A player can win the game by  moving from $N$ positions to $P$ positions, which guarantees that she wins.  Wythoff characterized the set of all $P$ positions in \wyt with the smaller stack listed first as the sequence $(a_n,b_n)=(\lfloor n\phi \rfloor, \lfloor n\phi^2 \rfloor)$, where $\phi=1.61803...$ is the golden ratio, the irrational positive root of the equation $\phi^{-1}+\phi^{-2}=1$.  Thus, on a grid of pairs $(a,b)$, the $P$ positions lie approximately on two lines of slope $\phi$ and $\phi^{-1}$.  
The sequence $(a_n,b_n)$ is called the sequence of \textit{Wythoff pairs}, and the numbers $a_n$ and $b_n$ are called \textit{lower and upper Wythoff numbers}, respectively.  For a gentle introduction to the interesting numerical properties of $(a_n,b_n)$, see \cite{mathellaneous} and references therein.

Many authors have studied variants of \wyt, including games produced by adding or subtracting legal moves, or by extending to more than two stacks of tiles.  In particular, Fraenkel introduced a compelling $n$-stack generalization in the same way that \wyt generalizes the classical game \nim \cite{MR2056935}.  His game is further analyzed in \cite{sun}, \cite{zeilberger}.

 In this paper, we introduce a new generalization to $n$ stacks called \twyst, inspired by the Reidemeister moves of knot theory and the similar game \untangle studied in \cite{ganzell1}, \cite{ganzell2}.  The idea derives from a new characterization of  \wyt.  Rather than playing on two stacks of tiles, we play \twyst on an unknotted but twisted loop of rope as in Figure~\ref{fig:l4r4}.  In Figure~\ref{fig:l4r4}, there are four \textit{left twists} in the rope followed by four \textit{right twists}.  On her turn, a player may either grab the left end of the rope and untwist some of the left twists, or grab the right end and untwist some of the right twists, or grab the piece between the last left twist and the first right twist, and unravel pairs of twists from the center.  The first  type of move corresponds to a sequence of type I Reidemeister moves applied to the same arc in the diagram.  The second type of move corresponds to a sequence of type II Reidemeister moves applied to the same arc in the center.  Thus, a player can either remove some of the left twists, remove some of the right twists, or remove an equal number of both.  Again, the last player to move legally wins.  Any such position of \twyst with $a$ left twists and $b$ right twists is identical to the position $(a,b)$ in \wyt.  

\begin{figure} 
\includegraphics{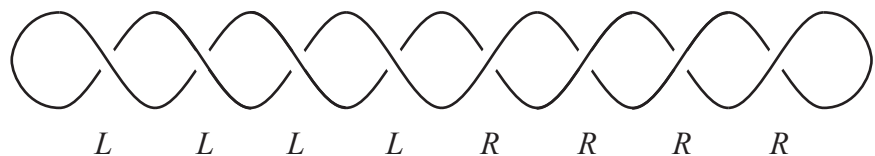}  
\caption{The \twyst position $(4,4)$ is equivalent to the position $(4,4)$ in \wyt}
 \label{fig:l4r4}
\end{figure}

Now we can generalize \wyt by allowing more complicated positions of \twyst, such as that in Figure~\ref{fig:l4r2l2}.  From this position, a player can either remove some of the left twists from the left side, or some of the left twists from the right side, or the same number of left and right twists from two consecutive sequences of twists.  We can play an equivalent game on stacks of tiles with sizes $(4,2,2)$.  Notice that the next player cannot simply remove tiles from the middle stack.  Thus, \twyst is not a modification of \nim since not all \nim moves are included.   Also, if she removes two tiles from the left two stacks, the two remaining tiles in the left stack combine with the two in the right stack (they are both sequences of left twists) to make the position $(4)$.  Thus, unlike many \nim-like games, \twyst does not break into a disjunctive sum of games after certain moves.  The complete  list of options to which the player can move from $(4,2,2)$ is $\{(3,2,2), (2,2,2), (1,2,2), (2,2), (4,2,1), (4,2), (4,1,1), (4), (3,1,2)\}$.  

\begin{figure}
\includegraphics{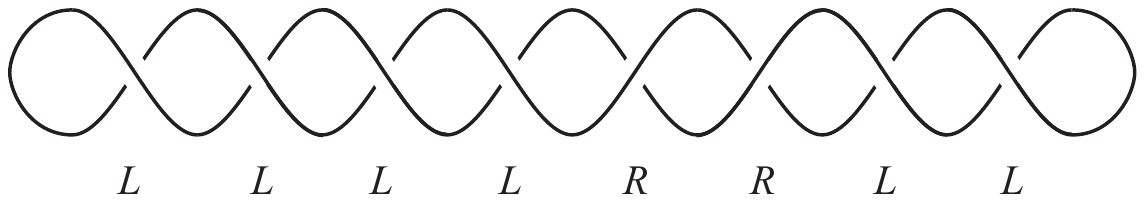}
\caption{The \twyst position $(4,2,2)$}
\label{fig:l4r2l2}
\end{figure}

For the remainder of the paper, we will not consider knot diagrams when analyzing the game.  In general, the game \twyst is played on an ordered sequence of stacks of tiles.  A player's turn consists of either removing any number of tiles from one of the end stacks, or removing an equal number of tiles  from two consecutive stacks.  If an interior stack is reduced to zero tiles, the two stacks adjacent are combined into a single stack.  That is, a position like $(a,b, 0, c,d)$ is just $(a,b+c,d)$. We call this equivalence  \textit{contraction}.  In normal play, the last player to move wins.  To prove results, we will regularly use either the well-ordering principle or induction, as explained in the game theory context in \cite{lessons}.

\section{Three Stack \twyst and a Wealth of Data}

Although \twyst can be played on any ordered sequence of stacks, we will find much to interest us just by looking at three stack positions.  We will denote the general three-stack position $(a,b,c)$ and try to characterize those that are $P$ positions.  Note that in general we can prove that a position is $P$ by showing that all of its options are $N$, and we can prove that a position is $N$ by showing that one of its options is $P$.

For example, consider the positions with three equal stacks. It is not hard to see that these  will always be $P$. Suppose there existed a position of this form that was $N$.  Let's consider the smallest such position and call it $(a, a, a)$. We know that the only options (up to symmetry and if they exist) from this position are of the form $(a- k, a, a)$ or $(a - k, a - k, a)$. Each has the option $(a-k, a-k, a-k)$, which has to be $P$ by assumption. Consequently, all of the options of $(a, a, a)$ are N, which contradicts our assumption that $(a, a, a)$ is  $N$. Thus, we have:

\begin{thm}  \label{equaltriples}
 For all $a \geq 0$, $(a, a, a)$ is ${P}$.
\end{thm}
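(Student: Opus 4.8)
The plan is to argue by strong induction on $a$ (equivalently, by taking a minimal counterexample and invoking the well-ordering principle, as sketched just before the statement). The base case is $(0,0,0)$, which is $P$ because the next player has no legal move. For the inductive step I would assume that $(a',a',a')$ is $P$ for every $a' < a$ and then show that $(a,a,a)$ is $P$ by verifying that each of its options is $N$; recall that to certify a position as $N$ it suffices to exhibit a single one of its options that is $P$, and that $P$/$N$ status is preserved by the left-right reflection of the board, so we may work up to this symmetry.

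First I would enumerate the options of $(a,a,a)$. Up to reflection, a move either strips $k$ tiles (with $1 \le k \le a$) from an end stack, giving $(a-k,a,a)$, or strips $k$ tiles from two consecutive stacks, giving $(a-k,a-k,a)$. I would then observe that each such option can reach the smaller equal triple $(a-k,a-k,a-k)$ in one further move: from $(a-k,a,a)$ remove $k$ from the middle and right stacks, and from $(a-k,a-k,a)$ remove $k$ from the right end stack (both moves are legal since $k \le a$). Because $k \ge 1$ we have $a-k < a$, so the inductive hypothesis makes $(a-k,a-k,a-k)$ a $P$ position. Hence every option of $(a,a,a)$ has a $P$ option and is therefore $N$, and since all options are $N$, the position $(a,a,a)$ is $P$, closing the induction.

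The step I expect to demand the most care is the interaction with contraction at the boundary value $k=a$. Removing $a$ tiles from two consecutive stacks produces $(0,0,a)$, whose interior zero forces a contraction to the single stack $(a)$, so this option is genuinely not of the form $(a-k,a-k,a)$ and the ``one further move to an equal triple'' recipe does not apply verbatim. Here I would instead argue directly that $(a)$ is $N$, since the next player may remove all $a$ tiles and hand over the empty terminal position, which is $P$. By contrast the companion boundary option $(0,a,a)$ (from $k=a$ on an end stack) is harmless: the recipe sends it to $(0,0,0)$, which is exactly the base-case $P$ position, so it is covered uniformly and the precise convention for leading zeros is immaterial to the conclusion. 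Once this single contraction case is dispatched, the induction goes through cleanly.
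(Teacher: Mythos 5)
Your proof is correct and follows essentially the same argument as the paper's: every option $(a-k,a,a)$ or $(a-k,a-k,a)$ is shown to be $N$ via its option $(a-k,a-k,a-k)$, which is $P$ by induction (the paper phrases this as a minimal-counterexample argument, which is equivalent). Your explicit treatment of the $k=a$ contraction case, where $(0,0,a)$ collapses to the single stack $(a)$, is a point the paper's proof glosses over, and your direct verification that $(a)$ is $N$ correctly closes that gap.
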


As we will see shortly, it is very common for $3$-stack $P$ positions to be \textit{palindromes}, i.e.\ of the form $(a,b,a)$.  For example, we can use a similar argument to that above to show that $(1,b,1)$ is $P$ for all $b\ge 4$.  Note that $(1,2,1)$ and $(1,3,1)$ are not $P$ since both have $(1,2)$ as an option.  In fact, so does $(2,3,2)$.  Also, we can show by exhaustion that $(1,3,2)$ is a $P$ position and another $P$ option for $(2,3,2)$.  The next result shows that $(0,1,0)$, $(1,2,1)$, and $(2,3,2)$ are the only palindromes $(a,a+1,a)$ that are $N$.

\begin{thm}
 For all $a \geq 3$, $(a, a+1, a)$ is ${P}$.
\end{thm}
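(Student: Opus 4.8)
The plan is to argue by minimal counterexample (equivalently, strong induction on $a$), exactly in the spirit of the proof of Theorem~\ref{equaltriples}. Suppose the statement fails and let $(a,a+1,a)$ with $a\ge 3$ be the $N$ position of this shape with $a$ as small as possible; then $(j,j+1,j)$ is $P$ for every $3\le j<a$. Since the position is a palindrome, its option set is symmetric under reversal, so it suffices to rule out the moves that touch the left end: removing tiles from the first stack, giving $(j,a+1,a)$ with $0\le j\le a-1$, and removing $k$ tiles from the first two stacks, giving $(i,i+1,a)$ with $0\le i\le a-1$. (No interior contraction can occur, since the middle stack stays at least $1$ after any legal move; only a vanishing end stack is dropped.) I will show each of these options is an $N$ position, contradicting minimality.

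For the generic range $j\ge 3$ (resp.\ $i\ge 3$) the reply is to collapse down to a smaller palindrome: from $(j,a+1,a)$ I remove $a-j$ from the last two stacks to reach $(j,j+1,j)$, and from $(i,i+1,a)$ I remove $a-i$ from the last stack to reach $(i,i+1,i)$; both are $P$ by the induction hypothesis. This leaves only the small residues $j,i\in\{0,1,2\}$, which I will dispatch with explicit winning moves drawing on facts already in hand: $(1,2)$ is the Wythoff $P$-pair, $(1,3,2)$ (hence its reverse $(2,3,1)$) is $P$, and apart from $(1,2)$ no Wythoff $P$-pair has two consecutive parts or a part equal to $1$. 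Concretely: $(0,a+1,a)=(a+1,a)$ and $(0,1,a)=(1,a)$ are therefore $N$ outright; from $(1,a+1,a)$ I remove $a-2$ from the last two stacks to land on $(1,3,2)$; from $(2,a+1,a)$ I remove $a$ from the last two stacks, reaching $(2,1,0)=(2,1)$, the reverse of $(1,2)$; from $(1,2,a)$ I empty the last stack to reach $(1,2)$; and from $(2,3,a)$ I reduce the last stack to $1$, reaching $(2,3,1)$. Each target is a $P$ position, so each of these options is $N$.

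With every option shown to be $N$, the position $(a,a+1,a)$ is forced to be $P$, contradicting the choice of $a$; this also settles the base case $a=3$, where the induction hypothesis is never invoked because only the residues $j,i\le 2$ occur. The main obstacle is not the generic collapse-to-a-palindrome step but the bookkeeping for the exceptional small residues: one must locate the non-obvious replies — especially the full pair-removal $(2,a+1,a)\to(2,1)$ and the reductions of the large right stack that expose $(1,3,2)$ and $(2,3,1)$ — verify their legality uniformly for all $a\ge 3$, and track carefully which vanishing stacks are merely dropped as end stacks rather than contracted. A secondary point to confirm is that each cited two-stack and small three-stack $P/N$ fact has indeed been established before this theorem.
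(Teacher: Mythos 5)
Your proof is correct and follows essentially the same route as the paper's: induction on $a$, with generic options answered by collapsing to the smaller palindrome $(m,m+1,m)$ and the residues $m\in\{0,1,2\}$ handled by explicit replies. The only cosmetic differences are that for $(1,a+1,a)$ you reply with $(1,3,2)$ where the paper uses the palindrome $(1,a+1,1)$, and for the $j=0$ cases you argue $N$ directly from the Wythoff pair structure instead of exhibiting the $P$ options $(2,1)$ and $(1,2)$.
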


\begin{proof}
The options of position $(a,a+1,a)$ up to symmetry are $(a-k,a+1,a)$ and $(a-k, a-k+1,a)$ for $0<k\le a$, which we must prove are $N$.  Each of these has the option $(a-k, a-k+1,a-k)$, which is a $P$ position by induction unless $a-k=0$, $1$, or $2$.  In case $a-k=0$, the option $(0,a+1,a)$ has $P$ option $(2,1)$, and $(0,1,a)$ has the $P$ option $(1,2)$.  If $a-k=1$, then $(1,a+1,a)$ has the $P$ option $(1, a+1,1)$ (note that $a+1\ge 4$), while $(1,2,a)$ has the $P$ option $(1,2)$.  Finally, if $a-k=2$, then $(2,a+1,a)$ has the $P$ option $(2,1)$, while $(2,3,a)$ has the $P$ option $(2,3,1)$.  Thus, in all cases the options of $(a,a+1,a)$ are $N$.
\end{proof}

The next result is our main existence and uniqueness theorem that allows us to represent $P$ positions visually.

\begin{thm} \label{cexists}
If $a > 0$ and $b > 0$, then there exists a unique $c \geq 0$, such that $(a, b, c)$ is  ${P}$. Furthermore, $c \leq a + b + \min \{a, b \} + 1$.
\end{thm}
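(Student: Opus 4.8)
The plan is to establish uniqueness first, then to obtain existence and the bound together by a single strong induction on $a+b$.

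Uniqueness is the short part. If $(a,b,c_1)$ and $(a,b,c_2)$ were both $P$ with $c_1<c_2$, then removing $c_2-c_1$ tiles from the right end stack of $(a,b,c_2)$ is a legal move reaching $(a,b,c_1)$ (a move on an end stack; if $c_1=0$ this is the contracted two-stack position $(a,b)$). This would exhibit a $P$ option of a $P$ position, which is impossible. Hence at most one $c$ works.

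For existence and the bound I would isolate the role of the right end stack. Call a move from $(a,b,c)$ a \emph{tail move} if it only removes tiles from the right end stack, so its options are exactly the positions $(a,b,c')$ with $0\le c'<c$ (reading $(a,b,0)$ as the two-stack position $(a,b)$); call every other legal move a \emph{head move} (reducing the left stack, or removing equally from stacks $1,2$, or removing equally from stacks $2,3$, together with any contraction they trigger). Since a position is $P$ exactly when all its options are $N$, and the tail options are precisely the smaller members of the family $(a,b,\cdot)$, we obtain the clean dichotomy: $(a,b,c)$ is $P$ if and only if no head option of $(a,b,c)$ is $P$ and no $(a,b,c')$ with $c'<c$ is $P$. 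Call $c$ \emph{blocked} if $(a,b,c)$ has a head option that is $P$. The dichotomy then forces the least unblocked $c$, call it $c^\ast$, to be a $P$ position (every smaller $c$ is blocked, hence $N$, so the second clause holds at $c^\ast$), and forces every larger $c$ to be $N$ (it reaches $(a,b,c^\ast)$ by a tail move). Thus existence reduces to showing that only finitely many $c$ are blocked, and the bound reduces to counting them.

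To count blocked values I induct on $a+b$ (with $a,b\ge 1$), assuming the full statement for every pair of positive stacks of smaller sum, while handling two- and one-stack targets directly via Wythoff's classification and triviality. The point is that each head option lands in a family whose two fixed coordinates have sum strictly less than $a+b$ — for instance the stacks-$2,3$ move $(a,b-k,c-k)$ lies in the family $(a,b-k,\cdot)$, and the left reduction $(a',b,c)$ lies in $(a',b,\cdot)$ — or else collapses to a position of at most two stacks. By the inductive hypothesis each such family contains exactly one $P$ position, so each head-move type can block only a controlled number of values of $c$: the left reductions block at most one $c$ for each $a'\in\{0,\dots,a-1\}$; the stacks-$1,2$ moves at most one for each $k\in\{1,\dots,\min\{a,b\}\}$; and the stacks-$2,3$ moves at most one for each $k\in\{1,\dots,b\}$. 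Once the edge cases are folded in, these tallies sum to at most $a+b+\min\{a,b\}+1$, so some $c\in\{0,1,\dots,a+b+\min\{a,b\}+1\}$ is unblocked and therefore $c^\ast\le a+b+\min\{a,b\}+1$.

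The main obstacle is the bookkeeping in this last step. I expect the delicate points to be the contraction cases, where a head move zeroes an interior stack and the target degenerates to one or two stacks (for example the stacks-$1,2$ move with $k=b\le a$ producing the single stack $(a-b+c)$, the stacks-$2,3$ move with $k=b$, or the left reduction to $(b,c)$), since these must be counted against the known one- and two-stack $P$ positions rather than against the inductive hypothesis. Pinning the constant to exactly $a+b+\min\{a,b\}+1$ requires verifying that no degenerate target is double-counted and that each move type contributes no more blocked values than claimed; this careful case analysis, rather than any conceptual difficulty, is where the work lies.
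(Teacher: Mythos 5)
Your proposal is correct and is essentially the paper's own argument: uniqueness via one $P$ position being a tail option of the other, then existence and the bound by observing that each non-tail move type can produce a $P$ position for at most one value of $c$ (by uniqueness applied to the landing family, with contraction cases checked directly), giving at most $a+b+\min\{a,b\}$ blocked values plus the $c=0$ edge case and hence a $P$ position with $c\le a+b+\min\{a,b\}+1$ by pigeonhole. The only difference is your strong induction on $a+b$, which is superfluous: you only need that each target family contains \emph{at most} one $P$ position, and that is exactly the uniqueness statement already proved unconditionally, which is how the paper avoids induction entirely.
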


\begin{proof}
Such a $c$ must be unique, for otherwise there would be two $P$ positions $(a,b,c)$ and $(a,b,c+k)$, the first of which is an option of the second. To see existence, we first let $d = a + b + \min \{a, b \}$ for clarity. Suppose that $(a, b, 0), (a, b, 1), ... \, , (a, b, d)$ are all $N$ positions. By definition, each must have a move that results in a $P$ position. We argue that each of these moves must be distinct. Without loss of generality, suppose $(a, b, c)$ and $(a, b, c')$ can both move to $P$ positions by removing $k$ tiles from the left and middle stacks. Thus, $(a - k, b - k, c)$ and $(a - k, b - k, c')$ are both $P$ positions. By the uniqueness property, we conclude that $c = c'$. So, the moves that take $(a, b, 1), ... \, , (a, b, d)$ to their respective $P$ options must be distinct. Finally, consider $(a, b, d + 1)$. We know that removing from only the right-most stack will result in an $N$ position. A quick count of the remaining options yields $d$ possible moves which could result in a $P$ position. But notice that this set of $d$ possible moves is the same set of $d$ possible moves by which $(a, b, 1), ... \, , (a, b, d)$ could be reduced to a $P$ position. We've already established that if $(a, b, d+1)$ has a move to a $P$ option, then it must be distinct from these. Thus, by the pigeonhole principle, we know that every option of $(a, b, d+1)$ is an $N$ position, so it must itself be $P$.
\end{proof}

Using the same ideas, we can extend Theorem~\ref{cexists} to longer positions $(a_1, \ldots, a_n)$ where the unique $a_n$ is bounded in terms of the other stack sizes.  We can also  prove the following.  Recall that the Grundy value of a game $G$ is the unique number $k$ such that the game $G+\textrm{\textsc{nim}}(k)$ is $P$.  

\begin{thm} \label{grundy}
Given positive $a_1, \ldots, a_{n-1}$ and $g\ge 0$, there is a unique $a_n\ge 0$, bounded in terms of the other numbers, such that the \twyst position $(a_1, \ldots, a_{n-1}, a_n)$ has Grundy value $g$.
\end{thm}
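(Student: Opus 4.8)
The plan is to mirror the proof of Theorem~\ref{cexists}, replacing ``$P$ position'' by ``Grundy value $g$'' and exploiting the stated characterization of the Grundy value via disjunctive sums with \nim heaps. Throughout, write $p=(a_1,\dots,a_{n-1})$ for the (positive) prefix, and for $c\ge 0$ let $X_c$ denote the position $(a_1,\dots,a_{n-1},c)+\textsc{nim}(g)$. By the definition of Grundy value, $(a_1,\dots,a_{n-1},c)$ has Grundy value $g$ exactly when $X_c$ is a $P$ position, so it suffices to show that exactly one $c$, bounded in terms of $p$ and $g$, makes $X_c$ a $P$ position.

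First I would record the uniqueness half as a fully general lemma: for any sequence $Q$ (of any length, possibly empty) and any target $h\ge 0$, at most one $c$ has the property that $(Q,c)$ has Grundy value $h$. Indeed, if $c<c'$ both worked, then $(Q,c)$ is an option of $(Q,c')$ obtained by removing $c'-c$ tiles from the last (end) stack, and no position can have an option of equal Grundy value, since the Grundy value is the $\operatorname{mex}$ of the Grundy values of its options. This single observation simultaneously yields uniqueness of $a_n$ in the theorem (take $Q=p$, $h=g$) and will be reused repeatedly below; note that it is unconditional, so the existence argument that follows needs no separate induction on $n$.

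For existence I would run the pigeonhole argument of Theorem~\ref{cexists} on the family $X_c$. The options of $X_c$ that could be winning moves to a $P$ position fall into three classes, each of which, by the lemma, is winning for \emph{at most one} value of $c$. (i) Reducing the \nim heap to $\textsc{nim}(g')$ for some $g'<g$ leaves $(p,c)$ unchanged and is winning iff $(p,c)$ has Grundy value $g'$; by uniqueness with $Q=p$, $h=g'$, each of these $g$ moves works for at most one $c$. (ii) A \twyst move on the first $n-1$ stacks that does not touch the last stack sends $X_c$ to $(p',c)+\textsc{nim}(g)$ for a modified prefix $p'$ (possibly shortened by contraction); this is winning iff $(p',c)$ has Grundy value $g$, so by uniqueness with $Q=p'$ it works for at most one $c$, and the number of such move types depends only on $p$. (iii) Removing $k$ tiles from the last two stacks sends $X_c$ to $(a_1,\dots,a_{n-1}-k,\,c-k)+\textsc{nim}(g)$, contracting when $a_{n-1}-k=0$; for each fixed $k$ the resulting base sequence is determined, so the lemma again limits this to one $c$, giving at most $a_{n-1}$ further move types. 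Removing tiles from the last stack alone is deliberately excluded, as it sends $X_c$ to some $X_{c'}$ with $c'<c$.

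Let $D$ be the total number of move types in classes (i)--(iii); it is finite and bounded in terms of $a_1,\dots,a_{n-1}$ and $g$. If $X_0,\dots,X_{D-1}$ were all $N$ positions, then each would have a winning move, necessarily of one of the $D$ types (the excluded last-stack move only reaches some $X_{c'}$ with $c'<D$, hence an $N$ position by assumption). Since each type is winning for at most one $c$, the assignment of a winning type to each of these $D$ positions is injective, hence a bijection onto the $D$ types; consequently each type is already used for an index below $D$ and, serving only one index, cannot also be winning for $X_D$. As the excluded move from $X_D$ likewise reaches only the $N$ positions $X_0,\dots,X_{D-1}$, every option of $X_D$ is $N$, so $X_D$ is $P$. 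Thus a $P$ position occurs at some index $c\le D$, producing the desired $a_n$ with the stated bound. I expect the main obstacle to be the bookkeeping in classes (ii) and (iii): one must verify that each move type genuinely determines a single base sequence so that the uniqueness lemma applies, handle the contraction cases cleanly, and count the types carefully enough to pin down the bound. The one genuinely new ingredient beyond Theorem~\ref{cexists} is the block of $g$ \nim-heap moves in class~(i), which is precisely what raises the count $D$ by $g$.
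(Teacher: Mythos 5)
Your overall route is exactly the paper's: the published proof of Theorem~\ref{grundy} is only a sketch (``apply the same analysis to $(a_1,\ldots,a_n)+(g)$''), and your uniqueness lemma, the reduction to $X_c=(p,c)+\textsc{nim}(g)$ being $P$, and the extra block of $g$ heap moves in class (i) flesh that sketch out correctly in spirit. However, one step fails as written: the inclusion of $X_0$ in the type-assignment pigeonhole. When $c=0$, the position $(p,0)$ \emph{is} the $(n-1)$-stack position $(a_1,\ldots,a_{n-1})$, in which $a_{n-1}$ has become an end stack; so $X_0$ has options --- remove $k$ tiles from stack $n-1$ alone --- that exist at no other $c$, are not ``excluded last-stack moves,'' and are not among your $D$ types. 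Hence ``each would have a winning move, necessarily of one of the $D$ types'' is unjustified for $X_0$. Worse, such a degenerate winning move from $X_0$ reaches $(a_1,\ldots,a_{n-1}-k)+\textsc{nim}(g)$, the very position your class-(iii) move with parameter $k$ reaches from $X_k$; in that scenario $X_0$ consumes no type of its own, your position-to-type map need not be onto the $D$ types, some type can remain unused below $D$ and could win precisely at $c=D$, so the conclusion ``$X_D$ is $P$'' does not follow. The paper's proof of Theorem~\ref{cexists} is structured to dodge exactly this: it assumes $(a,b,0),\ldots,(a,b,d)$ are all $N$ but runs the injective assignment only over $c=1,\ldots,d$ (where the non-last-stack move set is constant), never needing a winning move for $c=0$, and concludes that $(a,b,d+1)$ is $P$ --- which is the source of the ``$+1$'' in its bound. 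Your argument is repaired the same way: assume $X_0,\ldots,X_D$ are all $N$, assign types only to $X_1,\ldots,X_D$, and conclude $X_{D+1}$ is $P$; since the theorem asks only for a bound in terms of the other numbers, nothing else changes.

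Two smaller bookkeeping points, which you flagged but did not resolve. In class (ii), a move on stacks $(n-2,n-1)$ that zeroes $a_{n-1}$ does \emph{not} send $X_c$ to $(p',c)+\textsc{nim}(g)$: contraction merges the remnant of stack $n-2$ with the last stack, giving $(a_1,\ldots,a_{n-3},\,a_{n-2}-k+c)+\textsc{nim}(g)$, and similarly the class-(iii) move with $k=a_{n-1}$ yields $(a_1,\ldots,a_{n-3},\,a_{n-2}+c-k)+\textsc{nim}(g)$. Your uniqueness lemma still limits each such type to at most one winning $c$, but only because the last coordinate is a strictly increasing (hence injective) function of $c$ with the preceding sequence fixed; this reformulation should be stated, since as phrased the lemma does not literally apply. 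With these repairs the proposal is a sound expansion of the paper's proof idea.
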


\begin{proof}[Proof idea:]
We apply the same analysis to the game $(a_1, \ldots, a_{n})+(g)$, where $(g)$ is just \nim played on one stack with $g$ tiles.  A unique $a_n$ will make this a $P$ position.
\end{proof}

Theorem~\ref{cexists} allows us to define the function $f: \mathbb{Z}^{> 0}\times \mathbb{Z}^{\geq 0} \rightarrow \mathbb{Z}^{\geq 0}$  by $f(a, b) = c$, where $(a, b, c)$ is ${P}$. The table of values of $f$ (shown on the left of Figure~\ref{data1}) makes it much easier to visualize possible patterns in the $P$ Positions (much like the chessboard diagram in standard \wyt).  We find some interesting patterns in the array in Figure~\ref{data1}.  Most notable is that it appears to be common for $f(a,b)=a$, i.e.\ many $P$ positions are \textit{palindromes} $(a,b,a)$. Of course, this cannot happen when $(a,b)$  is a $P$ position for \wyt, in which case $f(a,b)=0$.  Also, it cannot happen when $(a,b-a)$ is a $P$ position for \wyt, in which case $(a,b,a)$ has a $P$ option.  We color the data to highlight these patterns.  The white squares represent positions that are palindromes and the gray represent those that are not.  The pairs where $f(a,b)=0$ (the $P$ positions for standard \wyt) are colored green, and the positions where $(a,b-a)$ is $P$ for \wyt  are colored orange.  Figures~\ref{data2} and  \ref{data3} show larger collections of data.

\begin{figure} 
\includegraphics[scale=.5]{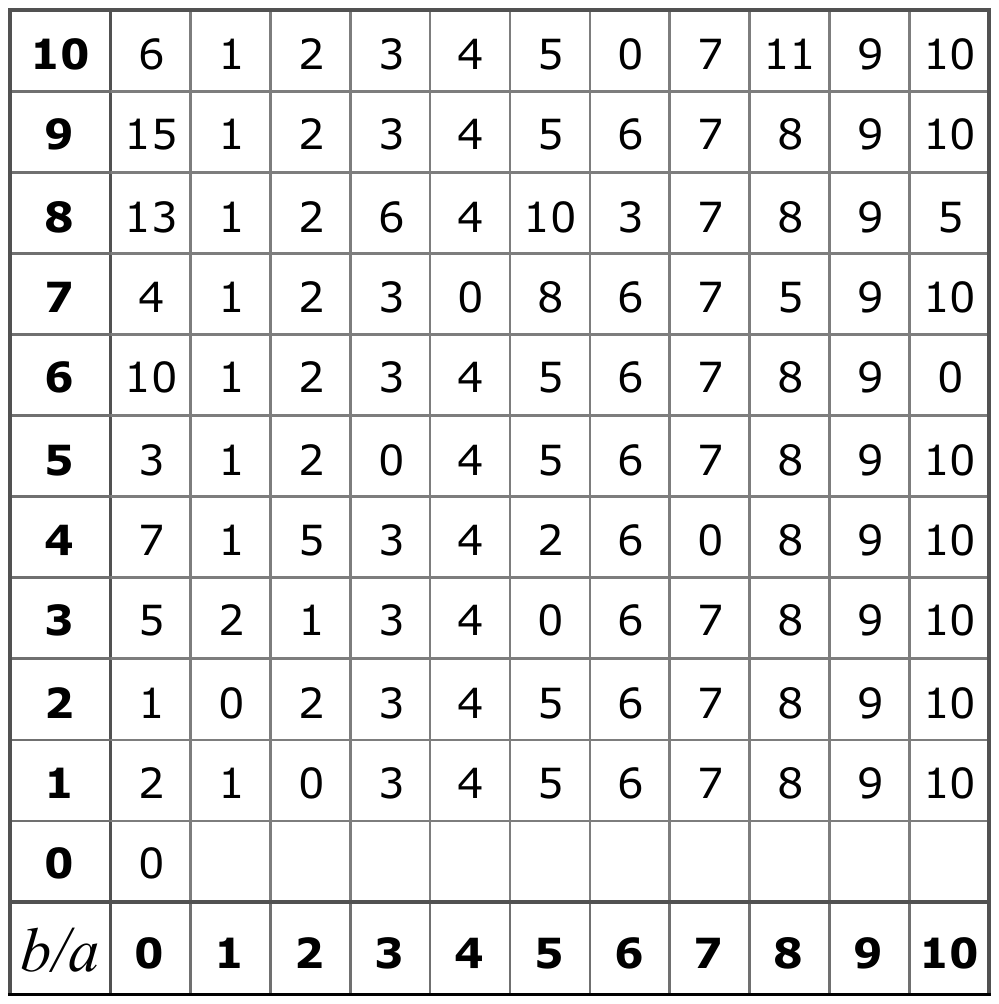}  \hspace{.2in}
\includegraphics[scale=.5]{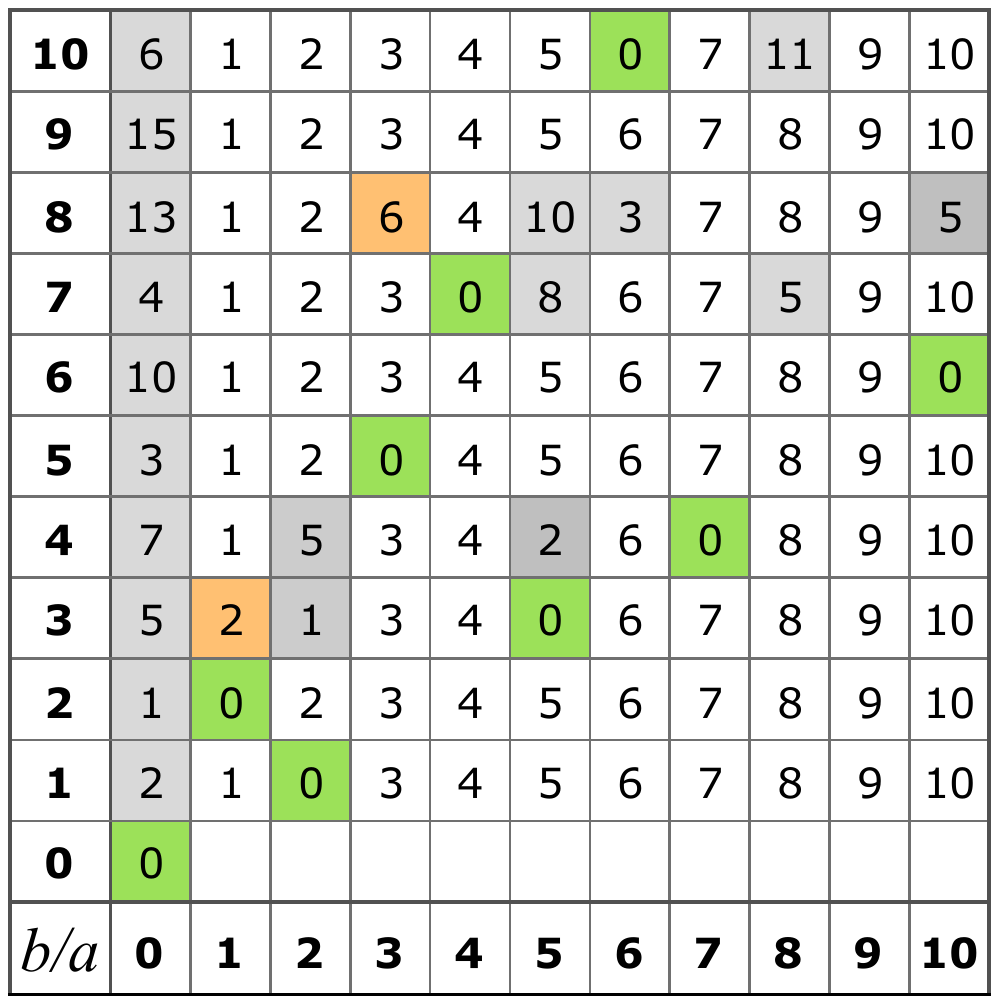}  
\caption{A table of values $c$ such that $(a,b,c)$ is $P$, guaranteed by Theorem~\ref{cexists}}
 \label{data1}
\end{figure}

\begin{figure} 
\includegraphics[scale=.35]{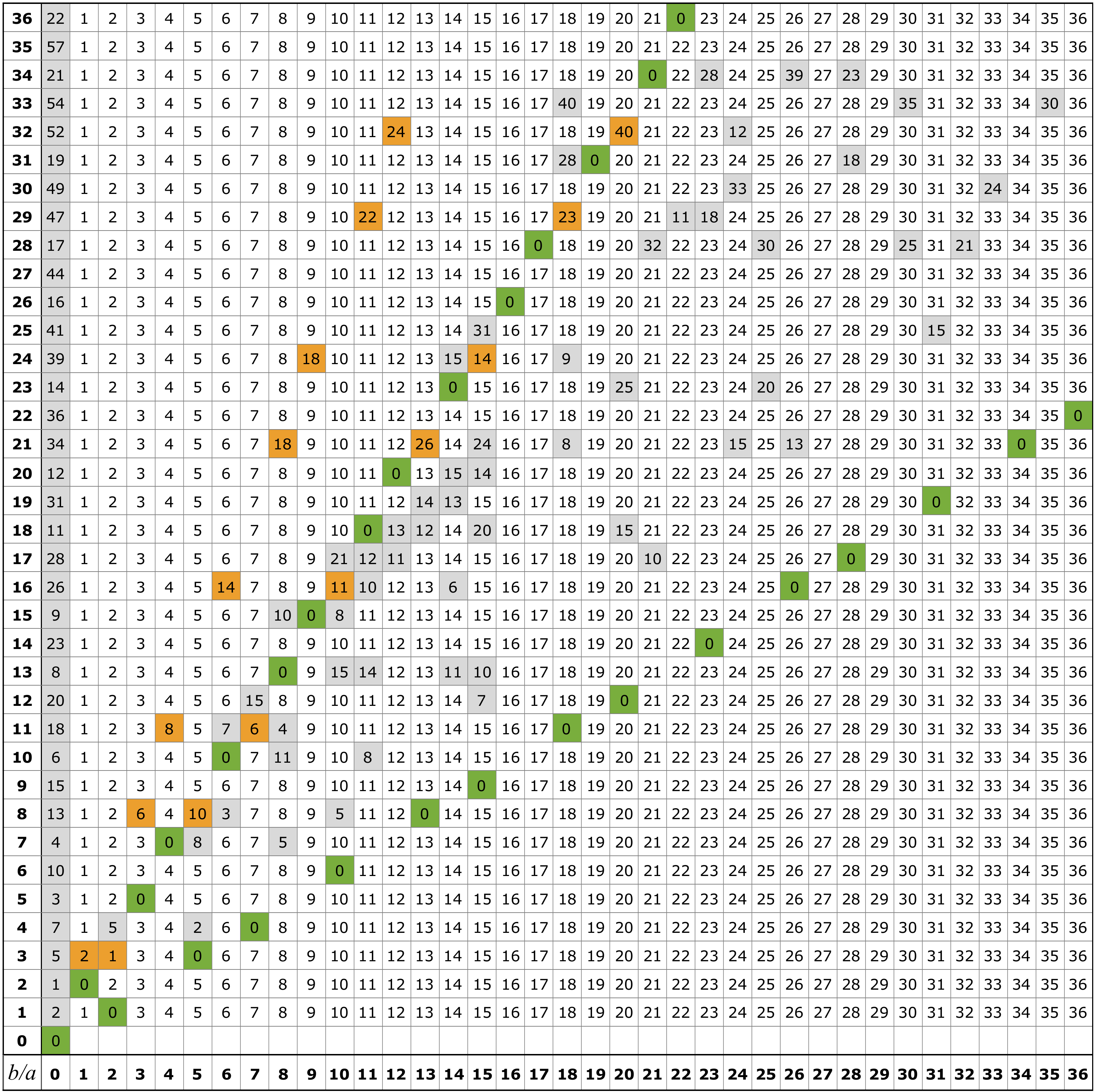}  
\caption{A larger table of $P$ positions}
 \label{data2}
\end{figure}

\begin{figure} 
\includegraphics[scale=.05]{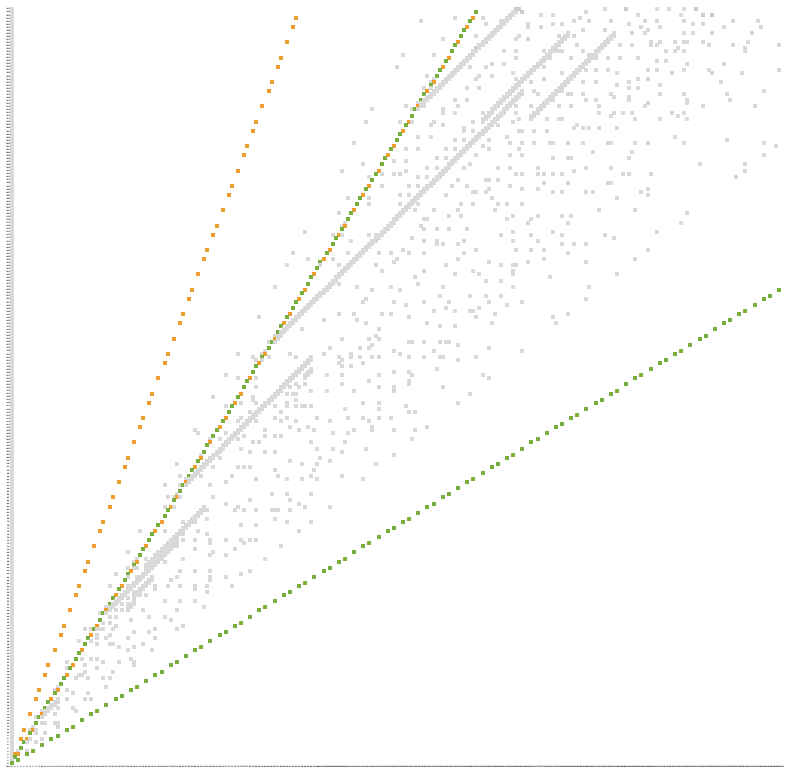}  
\caption{An even larger table}
 \label{data3}
\end{figure}

Notice how the symmetry of the game rules appears in the graphs.  That is, when $(a,b,c)$ is a $P$ position ($f(a,b)=c$), so is $(c,b,a)$ ($f(c,b)=a$).  So, if $a\ne c$ then row $b$ of the graph will have $c$ in column $a$ and $a$ in column $c$.  Also, by Theorem~\ref{cexists}, fixing $b$, for each $c$ there is exactly one $a$ such that $f(a,b)=c$.  So, each row of the graph is a permutation of the nonnegative integers in order  composed entirely of disjoint involutions.  For example, in row $7$, $0$ is exchanged with $4$ and $5$ is exchanged with $8$.  In particular, 
recall from the theory of standard \wyt that each row will contain exactly one pair $(a,b)$ for which $f(a,b)=0$.  This $a$  will then be $f(0,b)$, in the grey column on the left side of the data.  Thus, the column on the left is a permutation of the nonnegative integers given by the set of all Wythoff pairs as involutions, or integer sequence  A002251\cite{oeisA002251}.  

Notice that below a line of approximate slope $\phi^{-1}$ (populated by green squares) and above a line of approximate slope $\phi^2$ (populated by orange squares), all positions appear to be palindromes.  These facts are the main results of this paper.  As a consequence, each row of the graph is a permutation of the nonnegative integers composed of a finite number of disjoint involutions.  Also, for each column $a$ of the graph, we have $f(a,b)=a$ for all but finitely many $b$.  There appear to be further patterns in the data that repeat at higher scales similar to those studied in \cite{friedman}.  These patterns are a subject of ongoing research.  One pattern that is not quite obvious from the graph is given in the lemma below.

\begin{lem}
If $(a,b,c)$ is a non-palindrome $P$ position with $a<c$, then $a<b$.
\end{lem}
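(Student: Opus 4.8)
The plan is to prove the contrapositive by strong induction on the total number of tiles $a+b+c$: assuming $(a,b,c)$ is a $P$ position with $a<c$ but, contrary to the claim, $b\le a$, I will derive a contradiction. The case $a=0$ is immediate, since then $(a,b,c)=(b,c)$ is a Wythoff $P$-pair with $c>0$ and hence $b\ge 1>0=a$; so I may assume $1\le b\le a<c$. The single move I exploit is reducing the right-hand stack down to $a$: since $c>a$ this is legal, so the palindrome $(a,b,a)$ is an option of $(a,b,c)$, and because $(a,b,c)$ is $P$, the position $(a,b,a)$ is $N$ and therefore must have at least one $P$ option $q$. The options of $(a,b,a)$ are the end-reductions $(a',b,a)$ and $(a,b,a')$ with $0\le a'<a$, together with the pair moves $(a-k,b-k,a)$ and $(a,b-k,a-k)$ for $1\le k\le b$ (the case $k=b$ contracting to the single stack $(2a-b)$, which is $N$ since $2a-b\ge a\ge 1$). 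I will rule out $q$ in each family.

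The main obstacle is the pair-move options, and this is exactly where the induction does its work. Consider $q=(a-k,b-k,a)$ with $1\le k\le b-1$, so all three stacks are positive. Its ends are $a-k<a$, so $q$ is a non-palindrome with smaller end $a-k$; its middle is $b-k$, and since $b\le a$ we have $b-k\le a-k$. Its total size $2a+b-2k$ is strictly smaller than $a+b+c$ because $c>a$, so the inductive hypothesis applies to $q$: if $q$ were $P$ it would have to satisfy smaller end $<$ middle, that is $a-k<b-k$, contradicting $b\le a$. Hence every such $q$ is $N$. The right-pair options $(a,b-k,a-k)$ are the reversals of these, so by the reversal symmetry of the game they are $N$ as well. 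Thus no pair move can be the promised $P$ option.

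It follows that the $P$ option $q$ of $(a,b,a)$ is an end-reduction, and this will finish the proof by contradicting the global hypothesis rather than the induction. If $q=(a,b,a')$ with $0\le a'<a$, then $q$ is itself an option of $(a,b,c)$ (remove $c-a'$ tiles from the right end), so its being $P$ makes $(a,b,c)$ an $N$ position. If instead $q=(a',b,a)$, then its reversal $(a,b,a')$ is also $P$ and is again an option of $(a,b,c)$, with the same effect; these two subcases include the two-stack options $(a,b)$ and $(b,a)$ at $a'=0$. Either way $(a,b,c)$ has a $P$ option and is therefore $N$, contradicting the assumption that it is $P$. Hence $b>a$, completing the induction.
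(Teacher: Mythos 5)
Your proof is correct and takes essentially the same approach as the paper's: both pass to the palindrome $(a,b,a)$, an option of $(a,b,c)$ that must be $N$, then kill its pair-move $P$ options by induction (you induct on the total $a+b+c$, the paper on the minimal middle value $b$) and kill its end-reduction $P$ options by noting they would also be $P$ options of, or violate uniqueness for, $(a,b,c)$ itself. Your extra bookkeeping (the $a=0$ case, the symmetric options, handling $b=a$ without invoking Theorem~\ref{equaltriples}) fills in details the paper leaves implicit but does not change the argument.
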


\begin{proof}
Suppose $(a,b,c)$ is $P$ with $b\le a<c$.  By Theorem~\ref{equaltriples}, we must have $b<a<c$.  Further suppose that this is such a position with smallest value of $b$.  Then the position $(a,b,a)$ must be $N$ and therefore have a $P$ option.  The $P$ option is either $(a,b,t)$ for $t<a$, which is a contradiction since we assumed that $(a,b,c)$ is $P$, or it is $(a-k,b-k,a)$ for some $k\le b<a$.  In case $k=b$, we get the single stack position $(2a-k)$, which cannot be $P$.  In case $k<b$, we get a contradiction of the minimality of the example $(a,b,c)$.  
\end{proof}

The standard $P$ positions of \wyt give us an infinite collection of palindromes that are not $P$ positions for \twyst.  That is, if $(a_n,b_n)$ is $P$, then the positions $(a_n,b_n,a_n)$, $(b_n,a_n,b_n)$,  $(a_n, a_n+b_n, a_n)$, and $(b_n, a_n+b_n,b_n)$ are all $N$.  The corresponding non-palindrome $P$ positions $(a_n,b_n,c)$ and  $(b_n,a_n,c)$ are colored green in the figures above, and $(a_n, a_n+b_n, c)$ and $(b_n,a_n+b_n, c)$ are colored yellow.  From Wythoff's original description of $(a_n,b_n)$ as $(\lfloor n\phi\rfloor, \lfloor n\phi^2\rfloor)$, we have $b_n=\lceil \phi a_n\rceil$, $a_n=\lfloor b_n/\phi \rfloor$, $a_n+b_n = \lceil \phi^2 a_n\rceil$, and $a_n+b_n=\lceil \phi b_n\rceil$.  These give us approximate lines of non-palindrome $P$ positions in the figures of slopes $\phi$, $\phi^2$ and $\phi^{-1}$.  Furthermore, since every positive integer is a member of a Wythoff pair, the line of slope $\phi$  contains a point $(x,y)$ for every $x$.  The other two lines are extremal, as shown after the next lemma, which summarizes the discussion above.

\begin{lem} \label{tripleexamples}
Let $a$ be a positive integer. Then 
\begin{enumerate} \renewcommand{\labelenumi}{\textup{(\arabic{enumi})}}
\item $(a, \lceil \phi a \rceil, a)$ is ${N}$. 
\item If $a=a_n$ is a lower Wythoff number, then $(a,\lceil \phi^{2}a \rceil, a)$ is ${N}$.
\item If $a=b_n$ is an upper Wythoff number, then $(a,\lfloor \frac{a}{\phi	} \rfloor, a)$ is ${N}$.
\end{enumerate}
\end{lem}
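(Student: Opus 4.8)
\section*{Proof proposal}

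The plan is to prove all three parts uniformly by producing, in each case, a single move from the given palindrome to a two-stack position that is a \wyt $P$ position. Since two-stack \twyst is identical to \wyt, such a two-stack $P$ position is also a \twyst $P$ position, and therefore the palindrome is $N$. Two kinds of move from a palindrome $(x,y,x)$ land on a two-stack position: removing the \emph{entire} right-hand end stack yields $(x,y)$, and removing $x$ tiles from the middle and right stacks together empties the right end and yields $(x,y-x)$. The whole argument then reduces to recognizing, through the identities $b_n=\lceil\phi a_n\rceil$, $a_n=\lfloor b_n/\phi\rfloor$, $a_n+b_n=\lceil\phi^2 a_n\rceil$, and $a_n+b_n=\lceil\phi b_n\rceil$ recorded just above the lemma, that the prescribed middle entry is exactly the one that sends one of these two moves onto a Wythoff pair.

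I would dispatch parts (2) and (3) first, since each fixes the type of $a$. In part (2), $a=a_n$, and $a_n+b_n=\lceil\phi^2 a_n\rceil$ identifies the position as $(a_n,a_n+b_n,a_n)$; removing $a_n$ from the middle and right stacks leaves $(a_n,b_n)$, a Wythoff pair, so the position is $N$. In part (3), $a=b_n$, and $a_n=\lfloor b_n/\phi\rfloor$ identifies the position as $(b_n,a_n,b_n)$; removing the entire right stack leaves the Wythoff pair $(b_n,a_n)$, so again the position is $N$.

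For part (1) I would split on whether the positive integer $a$ is a lower or an upper Wythoff number; because the sequences $(a_n)$ and $(b_n)$ are complementary, every positive integer lies in exactly one of these two classes, as already noted in the discussion above. If $a=a_n$, then $\lceil\phi a\rceil=b_n$ and the position is $(a_n,b_n,a_n)$, which is $N$ since removing the right end stack leaves the Wythoff pair $(a_n,b_n)$. If instead $a=b_n$, then $\lceil\phi a\rceil=a_n+b_n$ and the position is $(b_n,a_n+b_n,b_n)$, which is $N$ since removing $b_n$ from the middle and right stacks leaves $(b_n,a_n)$. Thus part (1) holds for every positive integer $a$.

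I do not expect a genuine obstacle: there is no hard analysis, and the lemma merely re-expresses the four palindrome facts gathered in the preceding discussion in terms of the closed-form $\phi$-expressions for the middle stack. The only points needing care are bookkeeping ones---matching each ceiling or floor to the correct index identity, checking that the coordinated middle-and-right move is legal (the middle stack must be at least as large as the end stack being emptied, which holds since $a_n+b_n\ge b_n\ge a_n$), and invoking the complementarity of the Wythoff sequences so that the case split in part (1) is exhaustive.
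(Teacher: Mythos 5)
Your proposal is correct and matches the paper's own justification: the paper proves this lemma implicitly via the discussion immediately preceding it, observing that $(a_n,b_n,a_n)$, $(b_n,a_n,b_n)$, $(a_n,a_n+b_n,a_n)$, and $(b_n,a_n+b_n,b_n)$ are all $N$ because each has a \wyt $P$ option (removing the right end stack or an equal amount from the middle and right stacks), then translating via the same four $\phi$-identities and the complementarity of the Wythoff sequences. Your case split for part (1) and the legality check are exactly the bookkeeping the paper leaves to the reader.
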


\begin{lem}  \label{lowerbd}
If $b > \lceil \phi^{2}a \rceil$, then $(a, b, a)$ is ${P}$.
\end{lem}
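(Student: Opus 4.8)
The plan is to show that every option of $(a,b,a)$ is an $N$ position, proceeding by strong induction on $a$ (treating all admissible $b$ at once, and taking $a\ge 1$ as in Lemma~\ref{tripleexamples}, since $(0,b,0)$ is just the single stack $b$). Because the position is symmetric, it suffices to examine the two families of options obtained by acting on the left: removing $k$ tiles from the left end, giving $(a-k,b,a)$, and removing $k$ tiles from the leftmost two stacks, giving $(a-k,b-k,a)$, for $0<k\le a$; the right-hand options are mirror images. Since $b>\lceil\phi^2 a\rceil = a+\lceil\phi a\rceil$, one gets $b>2a$, so the middle stack never reaches $0$ under these moves and no interior contraction occurs; the only collapses happen when an end stack vanishes.

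The two boundary cases $k=a$ reduce to \wyt. Removing all of the left end yields the two-stack position $(b,a)$, while removing $a$ from the left two yields $(0,b-a,a)=(b-a,a)$. For each I would invoke Wythoff's classification in the form that a \wyt position with distinct stacks $x<y$ is $P$ only if $y=\lceil\phi x\rceil$, using the relation $b_n=\lceil\phi a_n\rceil$ recorded above. The hypothesis $b>\lceil\phi^2 a\rceil=a+\lceil\phi a\rceil$ gives both $b>\lceil\phi a\rceil$ and $b-a>\lceil\phi a\rceil$, and $b>2a$ forces $a$ to be the smaller coordinate in each pair; hence neither partner equals $\lceil\phi a\rceil$ and both positions are $N$.

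For the interior cases $0<k<a$, the idea is to exhibit one good reply in each option and appeal to induction. From $(a-k,b,a)$, removing $k$ from the right end produces the palindrome $(a-k,b,a-k)$; from $(a-k,b-k,a)$, the same move produces $(a-k,b-k,a-k)$. Each has first coordinate $a-k$ with $1\le a-k<a$, so the inductive hypothesis applies once I verify the defining inequality for the smaller palindrome. For the first this is immediate, since $b>\lceil\phi^2 a\rceil\ge\lceil\phi^2(a-k)\rceil$. For the second I would use $\lceil\phi^2 m\rceil=m+\lceil\phi m\rceil$ to rewrite the required bound $b-k>\lceil\phi^2(a-k)\rceil$ as $b>a+\lceil\phi(a-k)\rceil$, which follows from $b>a+\lceil\phi a\rceil$ together with $\lceil\phi a\rceil\ge\lceil\phi(a-k)\rceil$. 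Thus each interior option has a $P$ option by induction and is therefore $N$.

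With all options shown to be $N$, the position $(a,b,a)$ is $P$, completing the induction; the base case $a=1$ (where $b\ge 4$ and only $k=1$ occurs) is exactly the two \wyt reductions of the previous paragraph. The step I expect to require the most care is the interior left-two case, where confirming that the reduced palindrome $(a-k,b-k,a-k)$ still satisfies $b-k>\lceil\phi^2(a-k)\rceil$ relies on the exact identity $\lceil\phi^2 m\rceil=m+\lceil\phi m\rceil$ rather than a crude slope-$\phi^2$ estimate; the \wyt boundary cases are the other place where the precise threshold $\lceil\phi^2 a\rceil$ genuinely matters.
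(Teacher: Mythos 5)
Your proof is correct and takes essentially the same route as the paper's: both arguments show every option of $(a,b,a)$ is $N$ by answering the interior options $(a-k,b,a)$ and $(a-k,b-k,a)$ with the palindrome replies $(a-k,b,a-k)$ and $(a-k,b-k,a-k)$ (which are $P$ by induction, using $\lceil \phi^2 m\rceil = m+\lceil \phi m\rceil$), and by ruling out the boundary two-stack cases $(b,a)$ and $(b-a,a)$ via Wythoff's characterization together with $b>\lceil\phi a\rceil$ and $b-a>\lceil\phi a\rceil$. Your version is slightly more careful than the paper's in explicitly verifying the inductive inequality $b-k>\lceil\phi^2(a-k)\rceil$ and in flagging the implicit assumption $a\ge 1$, but the decomposition and key facts are identical.
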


\begin{proof}
Suppose the hypothesis holds. We examine the possible options (up to symmetry) of $(a, b, a)$. Removing the entire first column results in the position $(0, b, a)$. Since $b > \lceil \phi a \rceil > a > \lfloor \frac{a}{\phi^{2}} \rfloor$, this cannot be a standard \wyt $P$ Position. Removing $j$ from the first column, for $1 \leq j < a$, results in $(a - j, b, a)$. This position must be $N$ because it has the option $(a - j, b, a-j)$, which is $P$ by induction. Removing $a$ from the first and second columns results in the position $(0, b- a, a)$. We have already shown that $\lceil \phi^{2} a \rceil = \lceil \phi a \rceil + a$. So $b - a > \lceil \phi a \rceil + a - a = \lceil \phi a \rceil$, therefore this cannot be a standard \wyt $P$ Position. Finally, removing $k$ from the first and second columns, for $1 \leq k < a$, results in the position $(a - k, b - k, a)$. This position must be $N$ because it has the option $(a - k, b -k, a - k)$, which is $P$ by induction. 
\end{proof}

\begin{lem}  \label{upperbd}
If $0 < b < \lceil \frac{a}{\phi} \rceil$, then $(a, b, a)$ is ${P}$ \label{lowerbound}
\end{lem}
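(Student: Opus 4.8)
The plan is to prove, in parallel with Lemma~\ref{lowerbd}, by strong induction on $a$ that every option of $(a,b,a)$ is $N$, so that $(a,b,a)$ is itself $P$. Because the hypothesis forces $b$ to lie strictly below the end stacks, the move list up to the left--right symmetry of $(a,b,a)$ is: reduce one end stack from $a$ to some $a'$ with $0\le a'<a$, reaching $(a',b,a)$ (with $a'=0$ giving the two-stack \wyt position $(b,a)$); or remove $k$ tiles from the middle stack and one neighbor, $1\le k\le b$, reaching the single heap $(2a-b)$ when $k=b$ and the position $(a-k,b-k,a)$ when $1\le k<b$. The golden-ratio bookkeeping already used in Lemmas~\ref{lowerbd} and \ref{tripleexamples}, together with $a_n=\lfloor b_n/\phi\rfloor$, will do all the arithmetic.

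The middle-stack moves go through essentially as in Lemma~\ref{lowerbd}. The heap $(2a-b)$ is a nonzero \nim pile, hence $N$. The reduction $(a-k,b-k,a)$ has the palindrome option $(a-k,b-k,a-k)$, which is $P$ by the inductive form of this lemma once one checks that $b-k$ stays below the relevant threshold for $a-k$; this is a short estimate that rests on the fact that $\lfloor a/\phi\rfloor$ rises by at most $1$ each time $a$ increases by $1$, so lowering the palindrome end and the middle \emph{together} by $k$ keeps the point strictly below the slope-$\phi^{-1}$ line that the hypothesis put us under.

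The genuinely new work is the single-end family $(a',b,a)$, $0\le a'<a$, and here I would reduce the \emph{surviving} end stack: by Theorem~\ref{cexists} there is a unique $c=f(a',b)$ with $(a',b,c)$ a $P$ position, and if $c<a$ then $(a',b,a)$ reaches it by removing $a-c>0$ tiles from that end stack, so $(a',b,a)$ is $N$ (for $a'=0$ this is just the assertion that $\{a,b\}$ is not a Wythoff pair). Thus the whole lemma reduces to showing $f(a',b)<a$ for every $a'<a$. When $a'\ge b$ this is immediate: by the earlier lemma a non-palindrome $P$ position $(x,y,z)$ with $x<z$ has $x<y$, so $a'$ cannot be the smaller endpoint of a two-cycle of the involution $a'\mapsto f(a',b)$, and therefore $f(a',b)\le a'<a$.

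I expect the remaining subcase $a'<b$ to be the crux, since there $f(a',b)$ can exceed $a'$. The target estimate is that the large endpoint of such a non-palindrome is at most about $\phi b$: once one knows $f(a',b)\le\lceil\phi b\rceil$, the hypothesis (which keeps $(a,b)$ strictly below the slope-$\phi^{-1}$ line carrying the extremal $N$-positions of Lemma~\ref{tripleexamples}(3), i.e. $b<a/\phi$) gives $\lceil\phi b\rceil\le a-1<a$ and finishes the proof. The obstacle is that the crude bound $c\le a+b+\min\{a,b\}+1$ of Theorem~\ref{cexists} is far too weak to force $f(a',b)<a$ in this range, so the real labor is a sharp golden-ratio estimate tying the large endpoint of a non-palindrome to $\phi$ times its middle entry—an estimate I would establish inside the same induction (anchored by the Wythoff-pair boundary cases $a'=0$ and by the small base cases where no inductive palindrome exists) rather than quote from elsewhere.
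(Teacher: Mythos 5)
Your reduction of the lemma to the end-stack options is set up correctly, and your disposal of the options $(a',b,a)$ with $b\le a'<a$ via the ordering lemma (a non-palindrome $P$ position whose first entry is smaller than its third has first entry smaller than its middle, so $f(a',b)\le a'<a$) is valid, and in fact slightly different from what the paper does there. But the proof has a genuine gap exactly where you flag it: for $a'<b$ you only name a target estimate $f(a',b)\le\lceil\phi b\rceil$ and defer its proof to ``the same induction,'' and no such induction is available as stated. That estimate is essentially a corollary of the lemma you are proving: by uniqueness (Theorem~\ref{cexists}) and left-right symmetry, a $P$ position $(a',b,z)$ with $z\ne a'$ forces $(z,b,z)$ to be $N$, which is the contrapositive of this very lemma at the parameter $z$ --- and the $z$ you must control is precisely $f(a',b)$, which a priori can be as large as $a$ or larger, hence outside the reach of an induction on smaller triples. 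So the plan is circular at its crux, or at best the crux is simply missing.

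The paper closes this gap with two devices absent from your proposal. For the near reductions $a-b<a'<a$ (i.e.\ $k=a-a'<b$) it never bounds $f(a',b)$ at all: it exhibits an explicit $P$ option of $(a',b,a)$, namely $(a',b-k,a')$, reached by removing $k$ from the middle stack and the surviving end stack --- the same diagonal-to-palindrome move you deploy only against the middle-pair options; this already settles the subrange $a-b<a'<b$ of your hard case. For the far reductions $0<a'\le a-b$ it invokes Lemma~\ref{twystinequality}: every $P$ position $(x,y,z)$ with $x>0$ satisfies $z<x+y$, so $(a',b,a)$ with $a\ge a'+b$ cannot be $P$. This linear bound is much weaker than your $\phi$-estimate but suffices here, and proving it is the real labor: the paper devotes the following section to it, introducing the variant \nimmy (no interior contraction), proving $c<a+b$ there by a minimal-counterexample argument that hinges on the frozen $P$ positions $(a,0,a)$, and transferring the bound back via Theorem~\ref{sameps}. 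Without Lemma~\ref{twystinequality} or an independent proof of your sharper estimate, the case $a'\le a-b$ is unresolved and the lemma is not proved. (A smaller point: your step $\lceil\phi b\rceil\le a-1$ does not follow from $b<a/\phi$, which gives only $\lceil\phi b\rceil\le a$; the boundary case $a=\lceil\phi b\rceil$ is genuinely delicate --- e.g.\ $(2,1,2)$ and $(5,3,5)$ are $N$ by Lemma~\ref{tripleexamples}(3) --- and the paper's own proof quietly works under the strict hypothesis $a>\lceil\phi b\rceil$.)
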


This lemma is more difficult and requires preliminary results. In the proof of the existence and uniqueness theorem, we established the fact that if $(a, b, c)$ is a $P$ position, then $c$ is bounded above by the inequality $c \leq a + b + \min \{a, b \}+1$. In order to prove this lemma, we will need a stronger bound for $c$. We establish one in the next section, namely

\begin{lem} \label{twystinequality}
If $(a,b,c)$ is a $P$ position for \twyst with $a>0$, then $c<a+b$.
\end{lem}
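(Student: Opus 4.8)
The plan is to argue by strong induction on $a+b$, the combined size of the left and middle stacks, proving the equivalent statement that no $P$ position $(a,b,c)$ with $a>0$ has $c \ge a+b$; in other words, that $f(a,b)<a+b$. Throughout I would lean on the uniqueness half of Theorem~\ref{cexists}: for each pair $(a,b)$ there is exactly one $c$ with $(a,b,c)$ a $P$ position, call it $c^\ast = f(a,b)$. Note first that $b>0$ is automatic here, since if $b=0$ then $(a,b,c)$ contracts to the single stack $(a+c)$, which is $N$.

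The engine of the induction is that the inductive hypothesis eliminates almost every option of a position with large $c$. Suppose $a,b\ge 1$ and $c\ge a+b$. A left-only move gives $(a-k,b,c)$, a left-and-middle move gives $(a-k,b-k,c)$, and a middle-and-right move gives $(a,b-k,c-k)$; in each case the result either contracts to a single stack (hence is $N$) or is a genuine three-stack position $(a',b',c')$ with $a'\ge 1$, with $a'+b'<a+b$, and, crucially, with $c'-(a'+b')\ge c-(a+b)\ge 0$. The middle-and-right move is the delicate one, since it lowers the middle and right stacks by the same amount and so \emph{preserves} the excess $c-a-b$. By the inductive hypothesis none of these three-stack positions can be $P$. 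Consequently the only options of $(a,b,c)$ with any chance of being $P$ are the two \wyt positions obtained by deleting the left end, namely $(b,c)$ (remove all of stack $1$) and, when $a<b$, $(b-a,c)$ (remove $a$ from stacks $1$ and $2$), together with the right-only options $(a,b,c')$ for $c'<c$.

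Combining this with uniqueness pins down the situation. If $f(a,b)<a+b$ we are done, so suppose $c^\ast=f(a,b)\ge a+b$. For every $c$ with $a+b\le c<c^\ast$ the position $(a,b,c)$ is $N$, yet it has no $P$ option among right-reductions (the unique $P$ value $c^\ast$ is larger) and none among three-stack moves; hence one of $(b,c)$ or $(b-a,c)$ must be a Wythoff pair. Since a fixed integer has a unique Wythoff partner, each of $(b,\cdot)$ and $(b-a,\cdot)$ is a Wythoff pair for at most one $c$, so at most two values of $c$ in $[a+b,c^\ast-1]$ can escape. This already forces $c^\ast\le a+b+2$, and feeding in the golden-ratio identities recorded before Lemma~\ref{tripleexamples} (such as $b_n=\lceil\phi a_n\rceil$ and $a_n+b_n=\lceil\phi^2 a_n\rceil$) shows that two consecutive values $c=a+b$ and $c=a+b+1$ cannot both admit such an escape, trimming the extreme case.

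The main obstacle is closing the remaining gap, that is, ruling out $c^\ast=a+b$ and $c^\ast=a+b+1$ outright. Each of these survives the purely combinatorial count above, because in the low range $0\le c<a+b$ every move type is available and a naive pigeonhole over the $a+b+\min\{a,b\}$ non-right moves does not beat the target $a+b$. What is really needed is an \emph{existence} statement sharper than Theorem~\ref{cexists}: that some $c<a+b$ already yields a $P$ position. I expect this to come from the explicit palindrome family, since $(a,b,a)$ is a $P$ position for ``most'' pairs $(a,b)$ (as in Lemma~\ref{lowerbd}), giving $c=a<a+b$ directly, together with a case analysis---again driven by the Wythoff arithmetic of Lemma~\ref{tripleexamples}---of the finitely many $(a,b)$ near the lines of slope $\phi^{\pm 1}$ and $\phi^{2}$ where the palindrome fails and $c^\ast$ is instead one of the Wythoff-controlled non-palindrome values, all of which lie below $a+b$. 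Verifying that these explicit values account for every pair $(a,b)$, so that $c^\ast$ can never reach $a+b$, is the delicate step.
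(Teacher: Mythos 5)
Your inductive setup is sound as far as it goes: inducting on $a+b$, using the uniqueness half of Theorem~\ref{cexists}, and observing that the middle-and-right move preserves the excess $c-(a+b)$ correctly eliminates every three-stack option of a position with $c\ge a+b$, leaving only the two \wyt options $(b,c)$ and $(b-a,c)$ plus right-end reductions. The pigeonhole count over Wythoff partners then legitimately yields $c^\ast\le a+b+2$. But at that point the proof stops being a proof. You explicitly concede that ruling out $c^\ast\in\{a+b,\,a+b+1\}$ (and, modulo an unverified golden-ratio computation, $a+b+2$) ``is the delicate step,'' and you offer no argument for it --- only the hope that palindromes and a case analysis near the lines of slope $\phi^{\pm1}$ and $\phi^2$ will cover every pair $(a,b)$. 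That hope is also circular as stated: the band of non-palindrome $P$ positions is infinite, not finite, and the paper's control of that band (via Lemma~\ref{upperbd}) is itself \emph{derived from} Lemma~\ref{twystinequality}; only the lower-line palindrome result (Lemma~\ref{lowerbd}) has an independent proof. So the very structure you propose to invoke is downstream of the statement you are trying to prove.

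The paper closes exactly this gap with a different device. The natural minimal-counterexample argument wants to say: if $(a_0,b_0,c_0)$ is a smallest $P$ position with $c_0\ge a_0+b_0$, then a boundedness argument forces $c_0=a_0+b_0$, and removing $b_0$ from the two right stacks reaches $(a_0,0,a_0)$, which ``should'' be $P$ --- but in \twyst contraction turns $(a_0,0,a_0)$ into the single stack $(2a_0)$, which is $N$, and the argument dies, precisely at your residual cases. The paper therefore introduces \nimmy, identical to \twyst except that interior zero stacks do not contract, so $(a,0,a)$ really is $P$ (it is \textsc{nim} on $(a,a)$); the minimal-counterexample argument then closes cleanly (Lemma~\ref{nimmyinequality}), and Theorem~\ref{sameps} shows the two games have identical three-stack $P$ positions apart from $(a,0,a)$, transferring the inequality back to \twyst. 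Your draft is missing an idea of this kind --- some mechanism that makes the boundary case $c=a+b$ self-destruct --- and without it the argument cannot be completed along the lines you sketch.
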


\begin{proof}[Proof of Lemma~\ref{lowerbound}]
We proceed by induction.  Consider the position $(a,b,a)$ where $a > \lceil \phi b \rceil$.   Then for any $k<\min\{a,b\}$, we have $a-k > \lceil \phi (b-k) \rceil$.  Thus, by induction we may assume that any legal position $(a-k, b-k,a-k)$ is $P$.  Now we show that the options of $(a,b,a)$ are $N$ positions.  Any option $(a-k,b-k,a)$ may be moved to the $P$ position $(a-k,b-k,a-k)$ unless either $k=b$ or $k=a$.  If $k=b$, the position $(a-k,0,a)=(2a-k)$ is a single stack $N$ position.  If $k=a$, the position is $(b-a,a)$ which cannot be a \wyt $P$ position since $a>\lceil \phi(b-a)\rceil$.  The remaining options of $(a,b,a)$ are positions $(a-k, b, a)$ for $1\le k\le a$.  If $k=a$, the position is the two-stack $(b,a)$, which cannot be a \wyt $P$ position since $a > \lceil \phi b \rceil$.  If $1\le k<b$, then $(a-k,b,a)$ can be moved to the $P$ position $(a-k,b-k,a-k)$.  Finally, if $b\le k$, then $a\ge a-k+b$, and so by Lemma~\ref{twystinequality}, $(a-k,b,a)$ must be an $N$ position.
\end{proof}

\begin{thm}
Let $(a, b, c)$ be a non-palindrome $P$ Position in \twyst. Then either $a = 0$, or $b$ is bounded by the inequalities $\lceil \frac{a}{\phi} \rceil \leq b \leq \lceil \phi^{2}a \rceil$.  Both inequalities are sharp for infinitely many positions. 
\end{thm}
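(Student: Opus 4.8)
The plan is to split the statement into the two one-sided bounds, which fall out almost immediately from the palindrome lemmas together with uniqueness, and the two sharpness claims, only one of which is routine.

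First I would establish the bounds. Fix a non-palindrome $P$ position $(a,b,c)$ with $a>0$. Since an interior zero contracts $(a,0,c)$ to the single stack $(a+c)$, which is $N$, a genuine three-stack $P$ position has $b>0$, so Theorem~\ref{cexists} applies and $c=f(a,b)$ is the \emph{unique} value making $(a,b,\cdot)$ a $P$ position. If $b>\lceil\phi^{2}a\rceil$, then Lemma~\ref{lowerbd} makes the palindrome $(a,b,a)$ a $P$ position, so $f(a,b)=a$ by uniqueness, contradicting that $(a,b,c)$ is not a palindrome; hence $b\le\lceil\phi^{2}a\rceil$. Symmetrically, if $b<\lceil a/\phi\rceil$ then Lemma~\ref{upperbd} forces $(a,b,a)$ to be $P$, again giving $f(a,b)=a$, so $b\ge\lceil a/\phi\rceil$. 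The only positions not covered this way are those with a zero at an end (the ``green'' positions where $f(a,b)=0$ are \wyt pairs of exactly this kind): under the left--right symmetry $(a,b,c)\mapsto(c,b,a)$ every such position is equivalent to one with $a=0$, which is precisely the excluded case.

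For sharpness of the upper bound I would use Lemma~\ref{tripleexamples}(2): for every lower Wythoff number $a=a_n$ the palindrome $(a_n,\lceil\phi^{2}a_n\rceil,a_n)$ is $N$. By uniqueness $f(a_n,\lceil\phi^{2}a_n\rceil)\ne a_n$, so $(a_n,\lceil\phi^{2}a_n\rceil,f(a_n,\lceil\phi^{2}a_n\rceil))$ is a non-palindrome $P$ position whose middle stack hits the upper bound exactly. A short check that $\{a_n,a_n+b_n\}$ is not itself a \wyt pair shows $f\ne 0$, so these are genuine three-stack positions, and there are infinitely many $a_n$.

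The lower bound is the substantial part, and here I would reflect the middle, ``slope $\phi$'' family across the symmetry of the board. By Lemma~\ref{tripleexamples}(1) the palindrome $(a,\lceil\phi a\rceil,a)$ is $N$ for \emph{every} $a$, so $(a,\lceil\phi a\rceil,c_a)$ with $c_a=f(a,\lceil\phi a\rceil)$ is a non-palindrome $P$ position, and its mirror $(c_a,\lceil\phi a\rceil,a)$ is also $P$. The claim is that this mirror lands on the lower boundary, i.e. that $\lceil\phi a\rceil=\lceil c_a/\phi\rceil$ for infinitely many $a$, equivalently that $c_a\approx\phi^{2}a$ so that the slope-$\phi$ line reflects onto the slope-$\phi^{-1}$ line. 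I would run $a=b_n$ over the upper Wythoff numbers, so that the slope-$\phi$ point is not green, and pin $c_a$ down by combining the Wythoff identities recorded in the text ($b_n=\lceil\phi a_n\rceil$, $a_n=\lfloor b_n/\phi\rfloor$, $a_n+b_n=\lceil\phi^{2}a_n\rceil$) with the sharper size bound $c<a+b$ of Lemma~\ref{twystinequality}; this squeezes $c_{b_n}$ into the single admissible residue class and yields infinitely many mirrors with middle stack exactly $\lceil c_a/\phi\rceil$.

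I expect the decisive obstacle to be exactly this determination of $c_a=f(a,\lceil\phi a\rceil)$. The bounds, the upper sharpness, and the reflection idea are all immediate from the lemmas; the genuine difficulty is verifying that the reflected family meets the ceiling $\lceil c/\phi\rceil$ rather than falling one short of it. This is a ceiling-versus-floor computation that truly needs the Beatty/Wythoff arithmetic, and it is the same boundary bookkeeping that separates $\lceil a/\phi\rceil$ from $\lfloor a/\phi\rfloor$ in the hypotheses of Lemma~\ref{upperbd} and Lemma~\ref{tripleexamples}(3); I would therefore treat this off-by-one analysis as the heart of the argument and verify it carefully.
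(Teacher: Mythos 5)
Your first two-thirds coincide with the paper's proof, whose entire content is ``Apply Lemmas~\ref{tripleexamples}, \ref{lowerbd}, and \ref{upperbd}'': the band $\lceil a/\phi\rceil\le b\le\lceil\phi^{2}a\rceil$ falls out of Lemmas~\ref{lowerbd} and~\ref{upperbd} together with the uniqueness in Theorem~\ref{cexists}, and the upper bound is attained by the positions $(a_n,\lceil\phi^{2}a_n\rceil,c)$ supplied by Lemma~\ref{tripleexamples}(2). Your explicit glue (that a three-stack $P$ position has $b>0$ so Theorem~\ref{cexists} applies, and that $(a_n,a_n+b_n)$ is not a \wyt pair since the lower and upper Wythoff sequences are disjoint) is correct and is exactly what the paper leaves implicit.

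The genuine gap is where you yourself flag it: sharpness of the lower bound. The paper's intended witnesses come from Lemma~\ref{tripleexamples}(3): for $a=b_n$ the palindrome $(b_n,\lfloor b_n/\phi\rfloor,b_n)=(b_n,a_n,b_n)$ is $N$ (it has the \wyt $P$ option $(a_n,b_n)$), so by uniqueness the $P$ position in that slot is a non-palindrome --- these are the green positions $(b_n,a_n,0)$ populating the extremal line of slope $\phi^{-1}$. You instead try to produce witnesses with $c>0$ by reflecting the slope-$\phi$ family, which requires showing $c_a=f(a,\lceil\phi a\rceil)$ sits within one of its maximum $a+\lceil\phi a\rceil-1$. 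Nothing in the quoted results can deliver this: Lemma~\ref{twystinequality} bounds $c_a$ only from \emph{above}, and no lower bound on $c_a$ exists in the paper's toolkit, so the proposed ``squeeze into the single admissible residue class'' cannot be carried out --- indeed on the same slope-$\phi$ line at $a=a_n$ one has $c_a=0$, so $c<a+b$ alone says nothing about where $c_a$ lands. (Your claim does look true empirically, e.g.\ $f(2,4)=5$, so $(5,4,2)$ is a non-palindrome $P$ position attaining $b=\lceil 5/\phi\rceil$ exactly; but that is a computation, not a consequence of the lemmas.) One further caution: your own symmetry reduction, which discards zero-end positions as equivalent to the excluded case $a=0$, also discards the paper's witnesses, since those have $c=0$ and sit at $b=\lfloor a/\phi\rfloor=\lceil a/\phi\rceil-1$ --- consistent with the proof of Lemma~\ref{upperbd}, whose working hypothesis is $a>\lceil\phi b\rceil$ rather than the stated $b<\lceil a/\phi\rceil$. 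So your off-by-one worry is well founded and applies to the paper as well; but within the paper's framework the fix is to cite Lemma~\ref{tripleexamples}(3) (reading ``sharp'' as ``the extremal line is populated''), not to attempt the determination of $c_a$, which remains unproved in your proposal.
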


\begin{proof}
Apply Lemmas~\ref{tripleexamples}, \ref{lowerbd}, and \ref{upperbd}.
\end{proof}

The proof of Lemma~\ref{twystinequality} will rely on showing that the $P$ positions do not change much when we change the rules of the game slightly for three stacks.  In fact, computational data supports a conjecture  that \twyst has the same $P$ positions as a more restrictive game on three stacks that we call \bigtwyst.  In \bigtwyst, you can only remove an equal number of tiles from a pair consecutive stacks if you are removing from the larger two pairs that share the same stack.  The other moves remain the same.  So for example you can move from $(2,3,5)$ to $(2,2,4)$, but not to $(1,2,5)$.

\begin{conj}
 \bigtwyst has the same $P$ positions as \twyst on three stacks.
\end{conj}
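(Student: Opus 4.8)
The plan is to prove that the two games have identical $P$-position sets by strong induction on the total number of tiles $a+b+c$, a quantity that strictly decreases under every legal move (an end-stack removal lowers it by at least $1$, a pair removal by at least $2$, and contraction leaves it unchanged). Write $\mathcal P_T$ and $\mathcal P_B$ for the sets of $P$ positions of \twyst and \bigtwyst respectively. The only difference between the games is that \bigtwyst forbids the pair move on the pair whose outer stack is the smaller one: from $(a,b,c)$ with $a<c$ the move $(a,b,c)\to(a-k,b-k,c)$ is illegal, and the mirror statement holds for $a>c$. Since the reversal $(a,b,c)\mapsto(c,b,a)$ is an automorphism of both games, it is enough to treat $a<c$ (the case $a=c$ removes no moves).

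Assume inductively that $\mathcal P_T=\mathcal P_B$ on all positions of smaller total size. One inclusion is immediate: if $p\in\mathcal P_T$ then every \twyst option of $p$ is \twyst $N$, hence \bigtwyst $N$ by induction, and since every \bigtwyst option is a \twyst option, $p$ has no \bigtwyst option in $\mathcal P_B$, so $p\in\mathcal P_B$; thus $\mathcal P_T\subseteq\mathcal P_B$. For the reverse inclusion the only danger is that some \bigtwyst $P$ position $p=(a,b,c)$ with $a<c$ escapes through a forbidden left-pair move to a \twyst $P$ position. Everything therefore reduces to a single redundancy statement: whenever $(a,b,c)$ with $a<c$ admits a left-pair move onto a \twyst $P$ position, it also admits a legal \bigtwyst move -- an end-stack removal or the right-pair move -- onto a \twyst $P$ position. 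Granting this, the forbidden escape can always be rerouted, so $p$ would have a \bigtwyst option in $\mathcal P_B$, contradicting $p\in\mathcal P_B$ and forcing $\mathcal P_B\subseteq\mathcal P_T$.

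To attack the redundancy statement I would use the function $f$ of Theorem~\ref{cexists} together with its reversal symmetry $f(x,y)=z\iff f(z,y)=x$. Suppose the left-pair move $(a,b,c)\to(a-k,b-k,c)$ lands in $\mathcal P_T$; by reversal this says $(c,b-k,a-k)\in\mathcal P_T$, that is $f(c,b-k)=a-k<a$. In particular $(a,b,c)$ is \twyst $N$, so $c\neq f(a,b)$. If $c>f(a,b)$ the right-end removal $(a,b,c)\to(a,b,f(a,b))$ is a legal \bigtwyst move into $\mathcal P_T$ and we are finished. The hard case is $c<f(a,b)$, where the $P$-completion of $(a,b)$ cannot be reached by shortening the third stack. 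Here the two surviving \bigtwyst options are a left-end removal to $(f(c,b),b,c)$, which is legal and in $\mathcal P_T$ exactly when $f(c,b)<a$, and a right-pair removal to $(a,b-j,c-j)$, which lies in $\mathcal P_T$ exactly when $f(a,b-j)=c-j$ for some $0<j\le\min\{b,c\}$. The goal is to show that the hypothesis $f(c,b-k)=a-k$, combined with the bound $c<a+b$ of Lemma~\ref{twystinequality} and the non-palindrome bounds $\lceil a/\phi\rceil\le b\le\lceil\phi^2a\rceil$, forces one of these two alternatives.

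I expect this last step to be the main obstacle, which is exactly why the result is stated as a conjecture. Manufacturing the alternative move in the range $c<f(a,b)$ requires controlling how $f$ varies when a single coordinate is shifted -- comparing $f(c,b-k)$ with $f(c,b)$, or $f(a-k,b-k)$ with $f(a,b-k)$ -- and this is precisely the fine structure of the $P$-position array that the paper bounds but does not determine. A weaker ``the $P$ positions move only a little'' form of the redundancy statement already suffices for Lemma~\ref{twystinequality}, but the exact equality $\mathcal P_T=\mathcal P_B$ appears to need either a complete Beatty-type description of the \twyst $P$ positions -- the principal open problem suggested by Figures~\ref{data1}--\ref{data3} -- or a direct combinatorial pairing that reroutes every forbidden winning move to a legal one. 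Before committing to the structural argument I would first verify the redundancy statement exhaustively against the computed tables to rule out any small counterexample.
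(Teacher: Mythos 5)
The statement you are trying to prove is stated in the paper as a \emph{conjecture}: the authors offer only computational evidence and no proof, so there is no paper argument to match yours against. Your proposal, as you yourself concede in the final two paragraphs, is not a proof either: the entire difficulty is concentrated in your ``redundancy statement'' in the case $c<f(a,b)$, where you must manufacture a legal \bigtwyst move onto a \twyst $P$ position from the mere hypothesis $f(c,b-k)=a-k$, and you give no mechanism for doing so beyond hoping that the bounds of Lemma~\ref{twystinequality} and the palindrome bounds force one of the two alternatives. That hope is exactly the fine structure of the array $f$ that the paper bounds but does not determine, so the gap is genuine and is the whole content of the conjecture. That said, your framing is correct and matches the paper's surrounding discussion: the induction on total tile count is valid (contraction preserves it, every move decreases it), the inclusion $\mathcal P_T\subseteq\mathcal P_B$ is immediate since \bigtwyst options form a subset of \twyst options, your reading of the \bigtwyst rule (with $a<c$, the left-pair move is forbidden, consistent with the $(2,3,5)$ example) is right, and your reduction of the reverse inclusion to rerouting forbidden escapes is the correct skeleton any proof would have. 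Your use of Lemma~\ref{twystinequality} is not circular, since the paper proves it independently of the conjecture.

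Your closing observation --- that a weaker ``the $P$ positions move only a little'' statement already suffices for Lemma~\ref{twystinequality} --- is precisely the route the paper actually takes, and it is worth seeing the contrast. Rather than restricting moves as \bigtwyst does, the paper compares \twyst to \nimmy, in which interior zero stacks are frozen rather than contracted. For that comparison the analogue of your redundancy analysis becomes tractable: Lemma~\ref{nimmyinequality} is proved by a minimal-counterexample argument exploiting the frozen position $(a,0,a)$ as a known $P$ position, and Theorem~\ref{sameps} then shows the two games' three-stack $P$ positions agree except at $(a,0,a)$, which yields Lemma~\ref{twystinequality} as a corollary. So if your goal is the lemma rather than the conjecture itself, you should redirect your comparison from \bigtwyst to a rule change whose exceptional positions you can name explicitly; if your goal is the conjecture, your plan of first testing the redundancy statement exhaustively against the computed tables is the right next step, but be aware that closing it in general likely requires the kind of complete description of $f$ that remains open.
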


\section{Proving Palindrome Bounds}

We introduce the game \nimmy  to prove Lemma~\ref{twystinequality}.  \nimmy is identical to \twyst except that contraction does not happen with interior zero-tile stacks -- empty stacks are only removed from the ends of the sequence.  So, for example, in \twyst, the position $(3,0,3)=(6)$ is an $N$ position, while in \nimmy, $(3,0,3)$ is equivalent to $(3,3)$ in \textsc{nim} and so is a $P$ position.

\begin{lem} \label{uniquec}
For every pair $(a,b)$, there is a unique $c$ such that $(a,b,c)$ is a $P$ position for \nimmy.
\end{lem}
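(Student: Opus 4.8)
The plan is to imitate the proof of Theorem~\ref{cexists}, splitting the argument into a uniqueness part and an existence part, and paying close attention to the single place where \nimmy differs from \twyst: the treatment of interior zero stacks.

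I would dispose of uniqueness first, since it is uniform over all pairs $(a,b)$ and does not rely on existence. If $(a,b,c)$ and $(a,b,c')$ were both $P$ with $c<c'$, then deleting $c'-c$ tiles from the right-hand end of $(a,b,c')$ produces exactly the position $(a,b,c)$ (and when $c=0$ this strips a trailing end zero to give $(a,b)$, which is still the position we are calling $(a,b,0)$). A $P$ position would then have a $P$ option, which is impossible. This argument never invokes contraction of an interior zero, so it holds verbatim in \nimmy, and it establishes uniqueness even for the degenerate pairs (a coordinate equal to $0$) that appear below.

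For existence I would run the pigeonhole count of Theorem~\ref{cexists}. Set $d=a+b+\min\{a,b\}$ and suppose $(a,b,0),\dots,(a,b,d)$ are all $N$. A right-end move from any $(a,b,c)$ with $c\le d$ lands on some $(a,b,c')$ with $c'<c$, hence on an assumed-$N$ position, so each of $(a,b,1),\dots,(a,b,d)$ must reach $P$ by a move that is not a right-end removal. The non-right-end moves are: remove $j\in\{1,\dots,a\}$ from the left end, remove $k\in\{1,\dots,\min\{a,b\}\}$ from the left two stacks, and remove $k\le\min\{b,c\}$ from the right two stacks; since the right-two amount never exceeds $b$, all of these come from a fixed pool of at most $a+\min\{a,b\}+b=d$ moves. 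The crux is to show the resulting assignment of moves to positions is injective: if two positions $(a,b,c),(a,b,c')$ reached $P$ by the same type-and-amount move, the two $P$ outcomes would agree in every coordinate except a single rightmost one, so uniqueness would force $c=c'$. Granting injectivity, the $d$ positions $(a,b,1),\dots,(a,b,d)$ exhaust the pool, so $(a,b,d+1)$ cannot have a non-right-end $P$-option without duplicating one, while its right-end options are $N$ by assumption; thus $(a,b,d+1)$ is $P$ and existence follows (with the incidental bound $c\le d+1$).

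The main obstacle is verifying injectivity precisely in the cases where a move empties the middle stack, as this is the only point at which \nimmy's frozen rule diverges from \twyst. Removing $b$ from the left two stacks (when $a>b$) or $b$ from the right two stacks (when $c>b$) leaves a frozen interior zero $(x,0,z)$, which \nimmy reads as \nim on two heaps rather than collapsing to the single stack $(x+z)$. In \twyst such a collapsed stack is never $P$ for positive sizes and is therefore harmless, whereas in \nimmy the position $(x,0,z)$ is $P$ exactly when $x=z$, so such a move genuinely can reach $P$. The argument survives because these frozen positions are themselves governed by the uniqueness statement (a pair of \nim heaps has a unique equalizing partner), so two positions sharing such a move still force their remaining coordinates to coincide. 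I would therefore enumerate the few degenerate outcomes — left-end removal producing a two-stack \wyt position, left-two removal producing a shorter \wyt position or a single stack, and the two frozen interior-zero cases — and check in each that the shared-move hypothesis collapses to $c=c'$ using the appropriate fact (\wyt uniqueness for \wyt-type remnants, \nim uniqueness for frozen remnants). Finally I would record the boundary pairs directly: $a=0$ reduces to two-stack \wyt, giving the unique Wythoff partner of $b$, and $b=0$ reduces to frozen \nim, forcing $c=a$; together with the main count this yields the claim for every pair $(a,b)$.
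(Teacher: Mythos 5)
Your proposal is correct and follows exactly the route the paper takes: the paper's entire proof of this lemma is the remark that it ``has the same proof as that of Theorem~\ref{cexists},'' i.e.\ the uniqueness-via-option argument plus the pigeonhole count you reproduce. Your careful verification of injectivity in the frozen interior-zero cases (where $(x,0,z)$ is \nim rather than a single stack, so a middle-emptying move genuinely can reach $P$) and your handling of the boundary pairs $a=0$, $b=0$ are precisely the details the paper leaves implicit, so you have filled in rather than diverged from its argument.
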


This lemma has the same proof as that of Theorem~\ref{cexists}.

\begin{lem} \label{nimmyinequality}
In \nimmy, if $(a,b,c)$ is $P$ and $b>0$, then $c<a+b$.
\end{lem}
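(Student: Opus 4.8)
The plan is to prove the equivalent statement that, for $a\ge 1$ and $b\ge 1$, the unique $c$ with $(a,b,c)$ a $P$ position for \nimmy satisfies $c<a+b$; write $f(a,b)$ for this value, whose existence and uniqueness is Lemma~\ref{uniquec}. (If $a=0$ the leading empty stack is erased and the position is the two-stack \wyt game $(b,c)$, in which the claim must be read; the real content is the case $a,b\ge 1$, which I treat.) The left--right symmetry $(a,b,c)\mapsto(c,b,a)$ of the rules shows that for fixed $b$ the map $a\mapsto f(a,b)$ is an involution of the nonnegative integers, a fact I will use repeatedly. I argue by induction on $b$, and within a fixed row $b$ I consider a would-be counterexample with the smallest possible first stack $a$.

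The feature of \nimmy that drives everything is that an interior empty stack does not contract but splits the game into a disjunctive sum of \nim heaps. In particular, removing $b$ tiles from the middle and right stacks carries $(a,b,c)$ to $(a,0,c-b)=\nim(a)+\nim(c-b)$, which is a $P$ position exactly when $c=a+b$. Thus $(a,b,a+b)$ is an $N$ position and $f(a,b)\neq a+b$; the task is to exclude $f(a,b)>a+b$. Assuming this fails, every position $(a,b,c)$ with $0\le c\le a+b-1$ differs from the unique $P$ value and so is an $N$ position possessing a move to a $P$ position. Counting these required winning moves against the moves actually available will produce the contradiction.

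A removal from the right stack alone leaves some $(a,b,c')$ with $c'<a+b\le f(a,b)$, which is not the $P$ value and hence cannot be a $P$ option; and the deep split just discussed only helps at $c=a+b$. Every other option either removes from the first stack alone or strictly lowers the middle stack (the moves $(a,b-k,c-k)$ and $(a-k,b-k,c)$). As in the pigeonhole step of Theorem~\ref{cexists}, Lemma~\ref{uniquec} makes each fixed (move type, amount) produce a $P$ option for at most one value of $c$. The first-stack moves admit a clean description through the involution: $(a,b,c)$ has a first-stack move to a $P$ position precisely when $f(c,b)<a$, since $(a-j,b,c)$ is $P$ iff $f(a-j,b)=c$ iff $f(c,b)=a-j$ for some $1\le j\le a$. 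Because $f(\cdot,b)$ is a bijection and, by the minimality of $a$, $f(i,b)<i+b\le a+b-1$ for every $1\le i<a$, the first-stack moves rescue all but at most $b+1$ of the $a+b$ positions in $\{0,\dots,a+b-1\}$.

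It remains to rescue the at most $b+1$ positions $c$ with $f(c,b)\ge a$ using only middle-lowering moves, of which there are $\min\{a,b\}+(b-1)$. The main obstacle is that this crude tally does not fall below the number of positions to be covered: the count is tight, with an apparent surplus of about $\min\{a,b\}$, so a bare pigeonhole does not close. I expect to obtain the needed strict deficit from the two-sided inductive bound $|f(a',b')-a'|<b'$ valid for every smaller row $b'<b$: this confines each middle-lowering $P$ option to a proper subinterval of $\{0,\dots,a+b-1\}$, since a move lowering the middle by $k$ can only reach values below $a+b-2k$ on one side or above $a-b+2k$ on the other, so the confined moves cannot simultaneously cover the $b+1$ leftover positions. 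Making this interval bookkeeping exact --- reconciling the involution count on first-stack moves with the confinement of the middle-lowering moves into a genuine shortfall --- is the technical heart of the argument and the step I expect to be hardest.
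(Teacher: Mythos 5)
There is a genuine gap, and you have already located it yourself: the ``interval bookkeeping'' you defer to the end is not a technicality but the whole problem, and as sketched it cannot close. Under your own inductive two-sided bound $|f(a',b')-a'|<b'$ for rows $b'<b$, every middle-lowering rescue value automatically lands \emph{inside} the admissible range: a $(2,3)$-move by $k$ rescues $c=f(a,b-k)+k<a+(b-k)+k=a+b$, and a $(1,2)$-move by $k$ rescues $c=f(a-k,b-k)<a+b-2k<a+b$ (and both are nonnegative). So the confinement never disqualifies a single rescue slot; it only relocates each slot within $\{0,\dots,a+b-1\}$. The raw capacity therefore remains at least $a+\min\{a,b\}+(b-1)\ge a+b$ whenever $\min\{a,b\}\ge 1$, matching or exceeding the $a+b$ positions to be covered, and no pigeonhole contradiction follows. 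To finish along these lines you would need genuine \emph{collision} arguments --- showing that at least $\min\{a,b\}$ of the middle-move targets coincide with each other or with first-stack rescues --- and nothing in the interval shapes $(a-b,\,a+b-2k)$ and $(a-b+2k,\,a+b)$ forces such collisions, since the leftover positions (those $c$ with $f(c,b)\ge a$) are not localized by anything you have proved. A secondary soft spot: your leftover count ``at most $b+1$'' uses only the one-sided bound $f(i,b)<i+b$ from minimality of $a$ within row $b$, which is fine, but the case analysis over whether $f(0,b)$ is in range changes the deficit you need by one, exactly where the count is tightest.

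It is worth comparing with the paper's proof, which avoids counting entirely. It takes a $P$ position $(a_0,b_0,c_0)$ with $c_0\ge a_0+b_0>0$ of \emph{minimal total sum}, observes that the option $(a_0,b_0,c_0-1)$ is $N$ and that its $P$ option $(a',b',c')$ cannot fix the first two stacks (such a position would be an option of the original $P$ position), hence satisfies $c_0-1-c'\le a_0+b_0-a'-b'$; adding the minimality inequality $c'<a'+b'$ gives $c_0-1<a_0+b_0$, forcing $c_0=a_0+b_0$. Then the move removing $b_0$ from the two right stacks reaches $(a_0,0,a_0)$, which in \nimmy is a $P$ position (this is exactly why \nimmy was introduced), contradicting that $(a_0,b_0,c_0)$ is $P$. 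Notice that you independently found the paper's punchline --- your observation that $(a,b,a+b)$ has the $P$ option $(a,0,c-b)=(a,0,a)$ is precisely the final step of the paper's argument --- but you embedded it in a capacity count that does not produce a deficit, whereas the paper leverages it directly through the one-line inequality addition. I would recommend abandoning the pigeonhole scaffolding and rebuilding around that single observation.
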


\begin{proof}
Suppose there exist $P$ positions $(a,b,c)$ for \nimmy with $c\ge a+b>0$.  Let $(a_0,b_0,c_0)$ be a smallest such $P$ position, i.e. one with the smallest total sum $a+b+c$.  Then $(a_0,b_0,c_0-1)$ is an option and must be an $N$ position.  Thus, it has a $P$ option $(a',b',c')$.  But the $P$ option $(a',b',c')$ can't be $(a_0,b_0,k)$ for $k<c_0-1$ since that would be an option of $(a_0,b_0,c_0)$.  Thus, the move to $(a',b',c')$ must reduce the sum of the first two stacks at least as much as it reduces the third stack.  Thus, $c_0-1-c' \le a_0+b_0-a'-b'$.  Also, by induction, $c'<a'+b'$.  Adding the previous two inequalities, we get $c_0-1<a_0+b_0$, which together with $c_0\ge a_0+b_0$ gives $c_0=a_0+b_0$.  However, now if we remove $b_0$ from the two right stacks of $(a_0,b_0,c_0)$, we get $(a_0,0,a_0)$, which is a $P$ position for \nimmy.  Thus, we have a contradiction and all  $P$ positions with $b>0$ must satisfy $c<a+b$.
\end{proof}

\begin{thm} \label{sameps}
Except for the positions $(a,0,a)$, \twyst and \nimmy have exactly the same three-stack  $P$ and $N$ positions.
\end{thm}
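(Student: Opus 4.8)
The plan is to prove the equivalence by strong induction on the total number of tiles $a+b+c$, showing that every three-stack position \emph{other than} one of the form $(x,0,x)$ with $x>0$ receives the same outcome ($P$ or $N$) in \twyst and in \nimmy. First I would observe that from any position the two games offer literally the same tile-removing moves, and that the \emph{only} way their resulting positions can differ is when a move empties the interior stack: from a raw triple $(a',0,c')$ with $a',c'>0$, \twyst contracts to the single stack $(a'+c')$, an $N$ position, whereas \nimmy keeps $(a',0,c')$, which is just the two-pile \nim position $(a',c')$. The base cases are immediate: the empty position is $P$ in both games, and every one- or two-stack position is ordinary \nim or \wyt in both games, so those outcomes agree. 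Since no move ever increases the number of stacks, the whole induction stays within positions of at most three stacks.

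Next I would record the following move-comparison fact: for a single move out of $(a,b,c)$, the outcome of the \twyst result equals the outcome of the \nimmy result, \emph{unless} that move produces the interior triple $(x,0,x)$, in which case \twyst gives $N$ but \nimmy gives $P$. Indeed, when the raw result has positive middle stack or collapses to at most two stacks, the two results are the identical (smaller, non-excluded) position, so the induction hypothesis or a base case applies; when the result is $(a',0,c')$ with $a'\neq c'$ both positive, \twyst's $(a'+c')$ and \nimmy's $(a',c')$ are both $N$; and the remaining case $a'=c'$ is exactly the exceptional one. A short check then shows that a position $(a,b,c)$ with $b>0$ admits such an exceptional move only when $c=a+b$ (the move to $(a,0,a)$) or $a=b+c$ (the move to $(c,0,c)$), and that these are mutually exclusive when $b>0$, so at most one exceptional option ever occurs.

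The inductive step for a non-excluded $(a,b,c)$ with $b>0$ then splits in two. If no exceptional move is available, every move's two outcomes agree, so the sets of winning moves coincide and the two games assign $(a,b,c)$ the same outcome. The interesting case is when an exceptional move exists; by the left--right symmetry of both games (and of the excluded set) I would assume $c=a+b$, so the move to $(a,0,a)$ already witnesses that $(a,b,c)$ is $N$ in \nimmy. To show it is also $N$ in \twyst I cannot reuse this move, since $(a,0,a)$ is only $N$ there; instead I would produce a \emph{different}, non-exceptional $P$-option. Here Lemma~\ref{uniquec} supplies the unique $c^{*}$ with $(a,b,c^{*})$ a \nimmy $P$ position, and Lemma~\ref{nimmyinequality} guarantees $c^{*}<a+b=c$. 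Hence removing $c-c^{*}>0$ tiles from the right stack reaches $(a,b,c^{*})$, a $P$ position in \nimmy with positive middle stack; being non-exceptional and smaller, it is $P$ in \twyst as well, so $(a,b,c)$ is $N$ in \twyst, as needed.

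I expect the main obstacle to be precisely this last point: ruling out the scenario in which the exceptional option is the \emph{only} $P$-option in \nimmy, for then \twyst (where that option is $N$) would make $(a,b,c)$ a $P$ position and the games would disagree off the excluded set. The already-established uniqueness of Lemma~\ref{uniquec} together with the sharp bound $c<a+b$ of Lemma~\ref{nimmyinequality} is exactly what closes this gap. I would finish by emphasizing that the argument invokes only those two \nimmy facts and never Lemma~\ref{twystinequality}, so that Lemma~\ref{twystinequality} can be deduced afterward as a corollary for \twyst without circularity.
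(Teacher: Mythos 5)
Your proposal is correct and takes essentially the same approach as the paper: an induction in which the only possible divergence between \twyst and \nimmy comes through an option of the form $(a,0,a)$, forcing $c=a+b$, and that case is closed exactly as in the paper by combining the uniqueness of Lemma~\ref{uniquec} with the bound $c<a+b$ of Lemma~\ref{nimmyinequality} to produce a smaller common $P$ option $(a,b,c^{*})$. Your reorganization of the paper's two-sided contradiction argument into a direct induction with an explicit move-comparison is only a cosmetic difference.
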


\begin{proof}
Suppose that $(a,b,c)$ is a $P$ position for \nimmy and an $N$ position for \twyst.  Then it has an option that is a $P$ position for \twyst.  By induction, that option is also a $P$ position for \nimmy, which is a contradiction.

Suppose that $(a,b,c)$ is a $P$ position for \twyst and an $N$ position for \nimmy.  Then it has an option that is a $P$ position for \nimmy.  By induction, that option is either a $P$ position for \twyst (which would be a contradiction), or it has the form $(a,0,a)$, in which case $c=a+b$.  By Lemma~\ref{uniquec}, there must exist $c'$ such that $(a,b,c')$ is a $P$ position for \nimmy.  If $c'<c$, then again by induction, $(a,b,c')$ is a $P$ position for \twyst and we have a contradiction.  If $c'>c$, then \nimmy has a $P$ position $(a,b,c')$ with $c'>a+b$, contradicting Lemma~\ref{nimmyinequality}.  In all cases we get a contradiction, thus proving the theorem.
\end{proof}

Lemma~\ref{twystinequality} is now a corollary of Theorem~\ref{sameps} and Lemma~\ref{nimmyinequality}.

\section{More Stacks, More Fun}

As the three-stack version of our game is significantly more complicated than \wyt, it is not surprising that introducing more stacks makes the game more difficult. For example, consider a move on a three-stack position that results in an empty middle stack. The ensuing contraction  results in a one-stack position. Since we began with three positive stacks, and we can only remove from two stacks at a time, we know that the resulting one-stack position will always be positive, and thus an $N$ position. Consequently, when we want to know if a particular three-stack position is $P$, we don't need to worry about a move that empties the middle stack. When we venture beyond three stacks, we lose this advantage. 

Exhaustion shows that the smallest four-stack $P$ positions are $(1, 2, 2, 1)$, $(2, 1, 1, 2)$, $(1, 1, 2, 2)$, $(1, 2, 1, 2)$, and $(1,1,3,1)$.  These begin an inductive proof of 

\begin{thm} \label{4symmetric}
For $a,b>0$, the position $(a, b, b, a)$ is $P$ if and only if  $a = 1, b = 2$ or $a > 1, b = 1$.
\end{thm}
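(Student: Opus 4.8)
The plan is to treat the equivalence as two separate tasks: confirming that the two listed families, $(1,2,2,1)$ and $(a,1,1,a)$ with $a>1$, are $P$, and exhibiting an explicit $P$ option for every other symmetric position. The one technical nuisance running through every option count is \emph{contraction}, so I would track it carefully. For instance, the equal-removal moves on $(a,1,1,a)$ collapse to $(a,a)$ (from the pair of stacks $1,2$ or the pair $3,4$) and to $(2a)$ (from the central pair $2,3$, via $(a,0,0,a)$), both of which are $N$ as nonempty one- or two-stack positions; keeping straight which shorter position each move contracts to is where most of the care is needed.

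First I would settle the two positions that the induction leans on, in an order that avoids circularity. The position $(1,1,1,1)$ is $N$ because deleting an end stack reaches $(1,1,1)$, which is $P$ by Theorem~\ref{equaltriples}; establishing this first is essential, since $(1,1,1,1)$ occurs among the options of $(1,2,2,1)$ yet is itself one of the positions the theorem declares $N$. With that in hand, $(1,2,2,1)$ is shown to be $P$ by verifying that each of its options is $N$: the three-stack options $(2,2,1)$, $(1,2,2)$, and $(1,2,1)$ are $N$ by Theorems~\ref{cexists} and \ref{equaltriples} (uniqueness forces $f(2,2)=2$, reversal symmetry then handles $(1,2,2)$, and $(1,2,1)$ is already noted to be $N$), the four-stack option $(1,1,1,1)$ was just handled, and the contracted option $(2)$ is a nonempty single stack.

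Next I would prove that $(a,1,1,a)$ is $P$ for all $a\ge 2$ by induction on $a$, taking $(2,1,1,2)$ (an exhaustion base case) as the start. In the step, the equal-removal options are $(a,a)$ and $(2a)$, both $N$; deleting an end stack yields $(1,1,a)$, which is $N$ because $f(1,1)=1\ne a$; and removing $k<a$ from an end stack yields $(a-k,1,1,a)$. The crux is that each such position has the $P$ option $(a-k,1,1,a-k)$ when $a-k>1$, available by the inductive hypothesis, while for $a-k=1$ the natural symmetric option $(1,1,1,1)$ is $N$ rather than $P$, so the position $(1,1,1,a)$ must instead be shown $N$ directly via its option $(1,1,1)$. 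Hence every option of $(a,1,1,a)$ is $N$.

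Finally, for the converse I would partition the remaining symmetric positions and supply a $P$ option obtained by equal removal from the central pair: $(a,b,b,a)$ with $a,b\ge 2$ moves to $(a,1,1,a)$, and $(1,b,b,1)$ with $b\ge 3$ moves to $(1,2,2,1)$; together with $(1,1,1,1)$ already dealt with, this covers every position with $a,b>0$ outside the two $P$ families. I expect the main obstacle to be organizational rather than computational: the $P$-status of $(a,1,1,a)$ and $(1,2,2,1)$ must be established before it is invoked in the converse, the induction must single out the $a-k=1$ sub-case where the natural symmetric option fails, and each contraction must be resolved to the correct shorter position before its $N$-status can be read off from the earlier small-position analysis.
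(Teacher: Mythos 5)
Your overall route is the same as the paper's: establish that $(1,1,1,1)$ is $N$, take $(1,2,2,1)$ and $(2,1,1,2)$ as base positions, prove $(a,1,1,a)$ is $P$ by induction on $a$ with the sub-case $a-k=1$ handled separately via the option chain $(1,1,1,a)\to(1,1,1)$, and then dispatch every other palindrome by an equal removal from the central pair landing on $(a,1,1,a)$ or $(1,2,2,1)$. The paper's proof is organized identically (its option list for $(a,1,1,a)$ is $(a,a)$, $(1,1,1,a)$, $(1,1,a)$, and $(a',1,1,a)$ for $2\le a'<a$), so there is no genuinely different idea here; your appeals to uniqueness from Theorem~\ref{cexists} (that $f(1,1)=1$ and $f(2,2)=2$) in place of exhibiting the explicit $P$ option $(1,1,1)$ are cosmetic variants.

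However, the contraction bookkeeping you singled out as the crux is exactly where you slip. From $(a,1,1,a)$ the central-pair move gives $(a,0,0,a)$, which contracts to $(a,a)$, \emph{not} $(2a)$: by the rule $(a,b,0,c,d)=(a,b+c,d)$, the interior zero in position two merges stacks one and three into $a+0=a$, leaving $(a,a)$; in the rope picture the two surviving twist regions have opposite handedness and cannot combine. Likewise $(1,0,0,1)$, arising from $(1,2,2,1)$, contracts to $(1,1)$, not $(2)$. So every equal-removal option of $(a,1,1,a)$ is in fact $(a,a)$, and no single-stack option arises at all. This would be harmless except that your stated justification --- that these options are $N$ ``as nonempty one- or two-stack positions'' --- is not a valid principle: a nonempty two-stack position can be $P$ (e.g.\ $(1,2)$). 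What saves the argument is that the actual options $(a,a)$ and $(1,1)$ lie on the \wyt diagonal and are $N$ via the move to $(0,0)$. With those two corrections made explicit, your proof is complete and agrees step-for-step with the paper's.
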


\begin{proof}
Suppose $(a,b,b,a)$ does not have the proposed form.  Then either $a=1$ and $b\ne 2$ or $a>1$ and $b>1$.  In the first case, we note that $(1,1,1,1)$ is $N$ since it has the $P$ option $(1,1,1)$, and if $b>2$, $(1,b,b,1)$ has the option $(1,2,2,1)$.  In the second case, the position has option $(a,1,1,a)$ which is $P$ by induction.

For $a>2$, the position $(a,1,1,a)$ has  up to symmetry the $N$ options $(a,a)$, $(1,1,1,a)$, and $(1,1,a)$.  These last two are $N$ because they  have the $P$ option $(1,1,1)$.  The remaining options have the form $(a',1,1,a)$ for $2\ge a'<a$.  These can be moved to $(a',1,1,a')$, which is $P$ by induction.
\end{proof}

Remaining four-stack $P$ positions are  messy but appear to follow some patterns in the long run.  For example, we can show that $(1,a,b,1)$ is $P$ if and only if $a=b=2$ or $a=3$ and $b>5$.  

\begin{conj}
For every $a>0$, there exists $c_0>b_0>a$ such that $(a,b_0,c,a)$ is $P$ if and only if $c\ge c_0$
\end{conj}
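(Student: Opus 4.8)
The plan is to fix $a>0$ and study the map $b \mapsto f_a(b)$, where $(a,b,c,a)$ is $P$ precisely when $c = f_a(b)$, once existence and uniqueness of such a $c$ have been secured. First I would justify that existence/uniqueness framework: by the same pigeonhole argument as in Theorem~\ref{cexists} (extended to four stacks as remarked after that theorem), for each pair $(a,b)$ with $a,b>0$ there is a unique $c\ge 0$ making $(a,b,c,a)$ a $P$ position, and this $c$ is bounded in terms of $a$ and $b$. The conjecture asserts that for fixed $a$, the set of $c$ for which $(a,b_0,c,a)$ is $P$ — as $b_0$ ranges — forms an \emph{up-set}, i.e.\ there is a threshold $c_0$ (itself exceeding some $b_0>a$) above which every position is $P$ and below which none is. Rephrasing: I want to show $(a,b,c,a)$ is $P$ for all sufficiently large $c$, and that the threshold behaves monotonically.

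The core of the argument should be a monotonicity/up-set lemma in the spirit of Lemma~\ref{lowerbd}: I would prove that if $(a,b,c,a)$ is $P$ and we increase $c$ by one, the new position $(a,b,c+1,a)$ remains $P$ \emph{once $c$ is large enough}. The key structural fact is that when $c$ is very large relative to $a$ and $b$, most options of $(a,b,c,a)$ are forced to be $N$: removing tiles only from the (symmetric) end stacks, or from the interior pair $(b,c)$ or $(c,a)$, leaves the large central stack dominant, and I would show each such option has an easy $P$-option obtained by restoring symmetry or by invoking the single-stack/two-stack $N$-classification. The positions symmetric under the reversal $(a,b,c,a)\mapsto(a,c,b,a)$ play the role that palindromes played in the three-stack analysis, and I expect the threshold $c_0$ to be governed by a Wythoff-type boundary analogous to the $\lceil\phi^2 a\rceil$ bound of Lemma~\ref{lowerbd}. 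Concretely, I would identify a finite list of ``dangerous'' options (those that could be $P$) and argue that for $c\ge c_0$ this list is empty, so the position is $P$ by default; for $c<c_0$ I would exhibit a $P$-option explicitly, using the four-stack analogue of Lemma~\ref{twystinequality} to rule out escape moves that shrink the last stack too slowly.

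The main obstacle, and the reason this is stated as a conjecture rather than a theorem, is precisely the failure of the three-stack simplification noted at the start of Section~4: with four stacks a move can empty an \emph{interior} stack and trigger a contraction to a \emph{three-stack} position, so the inductive hypothesis must straddle both the three- and four-stack games. In particular, from $(a,b,c,a)$ one may remove $b$ from the first interior pair to reach $(a,0,c-b,a)=(a,c-b,a)$, a three-stack position whose $P/N$ status is governed by the (only partially understood) function $f$ of Theorem~\ref{cexists}. Controlling these contraction options is the crux: I would need a monotone bound showing $(a,c-b,a)$ is $N$ for the relevant range of $c$, which in turn requires the palindrome-bound theorem for three stacks together with a careful comparison of the two thresholds. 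I expect that closing this gap — proving that no contraction option sneaks in a $P$ position as $c$ crosses the threshold — is exactly the step that resists a clean inductive proof, and that a full resolution would require first establishing a sharper monotonicity statement for the four-stack function, paralleling the $c<a+b$ inequality of Lemma~\ref{twystinequality} but now accounting for the interior-contraction moves absent in the three-stack case.
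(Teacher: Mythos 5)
This statement is a \emph{conjecture} in the paper: the authors offer no proof, only the supporting data point that $(1,a,b,1)$ is $P$ if and only if $a=b=2$, or $a=3$ and $b>5$ (so for $a=1$ one may take $b_0=3$, $c_0=6$). So there is no paper proof to compare against, and your proposal, by your own admission, is a plan rather than a proof --- you explicitly leave the crux (``proving that no contraction option sneaks in a $P$ position as $c$ crosses the threshold'') unresolved. That alone means the statement remains unproved by your attempt, which is the honest outcome here; but beyond incompleteness, two concrete steps in your setup are wrong.

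First, your ``existence/uniqueness framework'' is false, and in fact contradicts the very statement you are trying to prove. The extension of Theorem~\ref{cexists} mentioned in the paper applies only when the varying stack is the \emph{last} stack $a_n$: uniqueness there rests on the fact that $(a,b,c)$ is an option of $(a,b,c+k)$, obtained by removing from an end stack. In $(a,b_0,c,a)$ the varying coordinate $c$ is \emph{interior}, and $(a,b_0,c,a)$ is not an option of $(a,b_0,c+k,a)$, so the pigeonhole argument gives no uniqueness. Indeed the conjecture itself asserts that $(a,b_0,c,a)$ is $P$ for \emph{every} $c\ge c_0$ --- infinitely many values of $c$ --- and the paper's four-stack data ($(1,3,b,1)$ is $P$ for all $b>5$) confirms that uniqueness in an interior coordinate fails badly. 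Second, your key contraction computation is incorrect: removing $b$ from the interior pair of $(a,b,c,a)$ gives $(a,0,c-b,a)$, and by the paper's contraction rule (``$(a,b,0,c,d)$ is just $(a,b+c,d)$'') the two stacks adjacent to the emptied one \emph{combine}, so the result is the two-stack position $(a+c-b,\,a)$, governed by ordinary \wyt, not the three-stack palindrome $(a,c-b,a)$ you analyze. Contractions to three-stack positions arise only from emptying an \emph{end} stack, e.g.\ $(0,b,c,a)=(b,c,a)$. So the ``dangerous option'' you flag as the crux is not even the right position, and the three-stack palindrome bounds (Lemmas~\ref{lowerbd} and~\ref{upperbd}) you plan to invoke are aimed at the wrong target. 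Any genuine attack on this conjecture would have to start from a correct catalogue of the options of $(a,b_0,c,a)$ --- including the \wyt positions produced by interior contractions --- and would need a monotonicity-in-$c$ statement for interior stacks that, as the failure of uniqueness shows, cannot be borrowed from Theorem~\ref{cexists}.
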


Next we consider \twyst on arbitrarily many stacks, and prove one general structural result.

Let $S$  be the set of \twyst positions $(s_1, \ldots, s_n)$ for which $1 \leq s_{i} \leq 2$ and any interior sequence of 1's has even length.

\begin{lem}
Every option of a position in $S$ is in $S$.
\end{lem}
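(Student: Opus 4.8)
The plan is to prove directly that $S$ is closed under the three kinds of \twyst moves. Since every stack in a position of $S$ has size $1$ or $2$, a single move can only send a stack to a value in $\{0,1,2\}$, so membership in $S$ can be destroyed in exactly two ways: either a contraction merges two positive neighbors of a freshly-emptied interior stack into a stack of size $\geq 3$, or the move creates (by reshaping or fusing runs of $1$'s) an \emph{interior} run of $1$'s of odd length. I would organize the argument as a case analysis on the move, and the whole point will be that the even-length hypothesis rules out both failures. Two standing facts keep the case analysis short: a run of $1$'s that touches an end carries \emph{no} parity constraint, and the contraction identity $(a,0,0,b)=(a,b)$ holds, since when two adjacent interior stacks are cleared their former outer neighbors have opposite ``type'' and do not merge. (Degenerate outcomes, such as a single remaining stack or the empty position, lie in $S$ vacuously.)

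First I would dispose of every move that cannot create a stack of size $\geq 3$. Any removal from an end stack, or from a consecutive pair containing an end stack, alters the sequence only near that end, where any affected run of $1$'s still touches the end and is therefore unconstrained. Among the interior paired removals, those that do not empty a single stack are also manageable: removing $1$ from an interior $(2,2)$ turns it into $(1,1)$ (possibly fusing with flanking runs), while removing $2$ from an interior $(2,2)$ or $1$ from an interior $(1,1)$ clears two adjacent stacks and is handled by $(a,0,0,b)=(a,b)$. In each of these cases an interior run of $1$'s either keeps its two bounding $2$'s while gaining or losing exactly two cells, or is formed by fusing two flanking interior runs; since interior runs begin with even length, every resulting interior run is again of even length, and all stacks stay in $\{1,2\}$.

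The substantive cases are the interior paired removals that empty exactly one stack, namely removing $1$ from an interior $(1,2)$ or $(2,1)$ pair. By left--right symmetry consider $(s_{i-1},s_i,s_{i+1})$ with $s_i=1$, $s_{i+1}=2$ and $i$ interior; the move sends $s_i\mapsto 0$ and $s_{i+1}\mapsto 1$, and contraction merges $s_{i-1}$ with the new $1$ into a stack of size $s_{i-1}+1$. \textbf{This is the main obstacle}: if $s_{i-1}=2$ the merged stack has size $3$ and the move escapes $S$. But $s_{i-1}=2$ together with $s_{i+1}=2$ forces the $1$ at position $i$ to be a run of $1$'s of length exactly $1$ bounded by $2$'s on both sides, i.e.\ an interior run of odd length---precisely what the definition of $S$ forbids. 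Hence for a position of $S$ one must have $s_{i-1}=1$, so the merged stack is $1+1=2$, and the run of $1$'s ending at position $i$ (which then has length $\geq 2$) simply loses its last two cells while retaining its boundaries, shrinking by an even amount.

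Finally I would collect the parity bookkeeping. In every case above an interior run of $1$'s is either untouched, or ends up touching an end (hence unconstrained), or has its length changed by $\pm 2$, or is obtained by fusing two interior runs of even length; in all situations the resulting interior runs are again even, and no merge produces a stack exceeding $2$. Therefore every option of a position in $S$ lies in $S$. The only delicate step is the single-emptied-stack contraction of the previous paragraph, where the odd-interior-run prohibition is exactly what blocks the merge to size $3$; everything else reduces to the observation that end-touching runs are exempt and that $(a,0,0,b)=(a,b)$.
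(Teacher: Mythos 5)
Your proof is correct, and since the paper omits this proof entirely (``just involves checking several types of moves''), your case analysis is precisely the check the authors skipped, with the one genuinely dangerous case handled properly: the forbidden odd interior runs of $1$'s are exactly what prevent the contraction after emptying a $1$ next to a $2$ from producing a merged stack of size $3$. One small caution: your opening claim that any pair containing an end stack is automatically harmless is not quite accurate---e.g.\ the pair $(s_1,s_2)=(2,1)$ empties an \emph{interior} stack and triggers a merge with $s_3$---but this causes no gap, because your main case only assumes the emptied stack is interior and therefore already covers these boundary pairs by the same odd-run argument.
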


We omit the proof, which just involves checking several types of moves. Let $S_k$ be the set of positions $(s_1, \ldots, s_n)$ in $S$ such that $\sum s_i = k$.  

\begin{lem} \label{sbase}
All positions in $S_0$ are $P$ and all positions in $S_1$ and $S_2$ are $N$.
\end{lem}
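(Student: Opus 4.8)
The plan is to prove this base case by a direct enumeration of the (very few) members of $S_0$, $S_1$, and $S_2$, checking each against the definitions of $P$ and $N$. First I would observe that the constraint $1 \le s_i \le 2$ together with the prescribed sum pins down these three sets completely. Since every stack has size at least $1$, the only position with $\sum s_i = 0$ is the empty position (no stacks remaining), so $S_0 = \{\,()\,\}$; the only position with $\sum s_i = 1$ is $(1)$, so $S_1 = \{(1)\}$; and the only positions with $\sum s_i = 2$ are $(2)$ and $(1,1)$, so $S_2 = \{(2),(1,1)\}$. For each of these I would confirm membership in $S$ by noting that the interior-$1$'s condition holds trivially: none of these positions contains a run of $1$'s bounded by $2$'s on both sides, and in any case the run of two $1$'s in $(1,1)$ already has even length.

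Next I would assign $P/N$ values, which requires only one short verification per position. The empty position admits no legal move, so the previous player has already won and it is $P$; this settles $S_0$. For $S_1$ and $S_2$ it suffices to exhibit, for each position, a single option that is $P$. From $(1)$ one removes the lone tile; from $(2)$ one removes both tiles; and from $(1,1)$ one removes one tile from each of the two consecutive stacks. In every case the resulting option is the empty $P$ position, so each of $(1)$, $(2)$, and $(1,1)$ has a $P$ option and is therefore $N$. This establishes that all positions in $S_1$ and $S_2$ are $N$.

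The hard part here is essentially nonexistent: the statement is purely the base case for the larger induction on $\sum s_i$, and the only point deserving a moment's care is verifying that the three sum-constrained sets are exhausted exactly by the positions listed and that each listed position genuinely lies in $S$. Accordingly, I would keep the write-up to the brief enumeration above followed by the three one-line move verifications, rather than invoking any machinery from the preceding sections.
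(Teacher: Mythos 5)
Your proof is correct and is essentially the paper's own argument: the paper disposes of this lemma with the single word ``Exhaustion,'' and your write-up simply carries that exhaustion out explicitly, correctly identifying $S_0=\{(\,)\}$, $S_1=\{(1)\}$, $S_2=\{(2),(1,1)\}$ and exhibiting the move to the empty $P$ position in each $N$ case.
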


\begin{proof}
Exhaustion.
\end{proof}

\begin{thm} \label{general} If $k \equiv 0 \ (\text{mod } 3)$, then all positions in $S_k$ are $P$, and if $k \not \equiv 0 \ (\text{mod } 3)$, then all positions in $S_k$ are $N$.
\end{thm}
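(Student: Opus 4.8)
The plan is to argue by strong induction on the total number of tiles $k=\sum s_i$. Lemma~\ref{sbase} supplies the base cases $k=0,1,2$, and the preceding closure lemma---every option of a position in $S$ is again in $S$---guarantees that the inductive hypothesis can be applied to every option we encounter. Note also that for $k\ge 3$ every position in $S$ has at least two stacks, since a single-stack position in $S$ has size at most $2$.

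The key observation to record first is that every legal move changes the total by a tightly controlled amount. Since each stack has size at most $2$, a move on an end stack removes $1$ or $2$ tiles, and a move removing $m$ from two consecutive stacks removes $2m$ tiles with $m\le 2$, i.e.\ $2$ or $4$; contraction merely merges stacks and never alters the total. Hence every option of a position in $S_k$ lies in $S_{k-1}$, $S_{k-2}$, or $S_{k-4}$, and the three admissible decrements $1,2,4$ are congruent to $1,2,1\pmod{3}$ respectively---crucially, none is divisible by $3$.

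The induction then splits on the residue of $k$. If $k\equiv 0\pmod{3}$, every option has total $k-1$, $k-2$, or $k-4$, each strictly smaller than $k$ and each $\not\equiv 0\pmod{3}$, so each option is $N$ by induction and the position is therefore $P$. If $k\equiv 1\pmod{3}$, I would produce a $P$ option by removing a single tile from an end stack (always possible, as some end stack has size $\ge 1$), reaching total $k-1\equiv 0$; that option is $P$ by induction, so the position is $N$. If $k\equiv 2\pmod{3}$, I would instead remove one tile from each of two consecutive stacks (possible since $n\ge 2$ when $k\ge 3$), reaching total $k-2\equiv 0$, again a $P$ option by induction, so the position is $N$.

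The mathematical content sits entirely in the closure lemma and the base cases, so the steps that need care are the bookkeeping ones: confirming that $\{1,2,4\}$ is exactly the set of possible decrements (this is where the cap of $2$ on stack sizes does all the work, since it forbids any move removing a multiple of $3$), checking that the witnessing moves in the $k\equiv 1$ and $k\equiv 2$ cases are legal and yield nonnegative smaller totals, and invoking the closure lemma so that those totals genuinely live in $S$. I expect no single step to be a serious obstacle; the only thing to get exactly right is the claim that the decrement set is forced to be $\{1,2,4\}$, after which the mod-$3$ case analysis is immediate.
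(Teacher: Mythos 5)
Your proof is correct and follows exactly the route the paper intends: strong induction on the total $k$, using the closure lemma to stay inside $S$, Lemma~\ref{sbase} for the base cases, and the observation that every move removes $1$, $2$, or $4$ tiles (never a multiple of $3$) to drive the mod-$3$ case analysis. The paper's own proof is just the one line ``Induction using Lemma~\ref{sbase} for base cases,'' so your write-up is essentially the intended argument with the omitted bookkeeping (the decrement set $\{1,2,4\}$ and the legality of the witnessing moves) made explicit.
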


\begin{proof}
Induction using Lemma~\ref{sbase} for base cases.
\end{proof}

Notice that, a fortiori, we get some interesting classifications about positions of arbitrary length. For example, an arbitrary concatenation of certain elementary $P$ positions like $(2,2,2)$, $(2,1,1,2)$, and $(2,1,1,1,1,1,1,1,1,2)$ is $P$.

\section{Twisting to Infinity}

We now consider \twyst with infinite stack sizes.  Although the rules do not extend to 
 transfinite ordinals as naturally as they do for \chomp (see \cite{huddle}), there is a relatively natural extension that allows the game to end in a finite number of moves, with some interesting consequences. Here, we only consider including the first infinite ordinal, which we simply call $\infty$.  Reducing a stack of size $\infty$ means reducing it to any finite size.  Thus, the rules for \twyst extend to positions that may have stack size $\infty$ as follows.  On a single move, 
 
 \begin{itemize}
 \item an end stack of size $\infty$ may be reduced to any finite size;
 \item a pair of neighboring stacks $(\infty,\infty)$ may be reduced to equal finite sizes $(a,a)$;
 \item a pair of neighboring stacks $(\infty, b)$ where one size is finite may be reduced to a pair with a smaller finite stack $(\infty, b')$ with $0\le b'<b$; and
 \item as before, neighboring stacks combine due to the reduction of an interior stack to $0$.  If one of the neighboring stacks has infinite size, the resulting stack is $\infty$.  
 \end{itemize}
 
Here are some examples.  The only options (up to symmetry) from the position $(\infty, \infty, \infty)$ are $(a,\infty,\infty)$ and $(a, a, \infty)$ for some finite $a$.  These both have the $P$ option $(a,a,a)$, and so $(\infty,\infty,\infty)$ is $P$.  The position $(1,2,\infty,3,4,5)$ allows the player to make a standard \twyst move on either the $(1,2)$ part or the $(3,4,5)$ part, leaving the other part unchanged.  However, any move that reduces the $2$ or the $3$ to $0$ also removes the neighboring stack; for example $(1,0,\infty,3,4,5)$ becomes $(\infty,3,4,5)$ and $(1,2,\infty,0,4,5)$ becomes $(1,2,\infty,5)$.
 
 \begin{lem} \label{lastinfinity} The  position $(a_{1}, ... \, a_{n-1}, \infty)$ is  $N$.
 \end{lem}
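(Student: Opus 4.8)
The plan is to produce a $P$ option directly, leaning on the existence result already established for finite positions rather than on any new induction. Because the last stack is an end stack of size $\infty$, the rules let the current player reduce it to any finite value, so it suffices to find a finite $t$ for which $(a_1, \ldots, a_{n-1}, t)$ is $P$: moving $\infty \mapsto t$ then exhibits a $P$ option and makes the original position $N$. The existence of such a $t$ is exactly the content of the extension of Theorem~\ref{cexists} to longer positions (equivalently Theorem~\ref{grundy} with Grundy value $g = 0$): for positive finite $a_1, \ldots, a_{n-1}$ there is a unique finite $t$ making $(a_1, \ldots, a_{n-1}, t)$ a $P$ position, and, crucially, the theorem bounds $t$, so $t$ is finite and reducing the infinite end stack to $t$ is a legal move.

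Before invoking this, I would normalize the position so that the existence theorem applies with positive entries. We may assume the position is reduced, so that $a_1, \ldots, a_{n-1}$ are positive: any interior zero disappears under contraction — and a zero adjacent to the infinite end stack simply merges into it, shortening the position — while a leading zero is dropped. After this reduction we are in one of two cases: either $n \geq 2$ with a positive finite prefix, to which the existence theorem applies and the argument above concludes, or the position has collapsed to the single stack $(\infty)$. The base case $(\infty)$ is handled by hand: the current player reduces $\infty$ to $0$, reaching the empty $P$ position, so $(\infty)$ is $N$.

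I would emphasize that finiteness of the prefix is essential rather than cosmetic, and this is the one place needing care. The statement would be \emph{false} for an infinite prefix, since $(\infty, \infty)$ is in fact $P$: each of its options $(a, \infty)$, $(\infty, a)$, and $(a, a)$ is $N$ (the first two by this very lemma, the last by Wythoff's characterization). Thus the only subtle point I anticipate is confirming that the existence theorem is applied strictly within its stated scope of positive finite prefixes; there is no estimate to perform, and once the prefix is known to be positive and finite, the finite $P$-completing value $t$ furnished by Theorem~\ref{grundy} finishes the proof.
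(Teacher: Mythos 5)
Your proof is correct and takes essentially the same approach as the paper: invoke the extension of Theorem~\ref{cexists} (equivalently Theorem~\ref{grundy} with $g=0$) to obtain the unique finite $t$ making $(a_1,\ldots,a_{n-1},t)$ a $P$ position, then reduce the infinite end stack to $t$; your normalization of zero entries and the base case $(\infty)$ are harmless additions that the paper leaves implicit. One correction to your side remark, though it does not affect the lemma itself: $(\infty,\infty)$ is $N$, not $P$, because the pair-reduction rule sends $(\infty,\infty)$ to $(a,a)$ for any finite $a\ge 0$, including the terminal $P$ position $(0,0)$ --- in finite play an equal removal may empty both stacks, as in the paper's proof of Theorem~\ref{6infinities} where both interior $1$'s are removed at once --- so your list of options for $(\infty,\infty)$ omits the one move that wins.
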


\begin{proof} By Theorem \ref{cexists}, there exists a unique $a_{n}$ such that $(a_{1}, ... \, a_{n-1}, a_{n}) $ is $P$. This position is an option of $(a_{1}, ... \, a_{n-1}, \infty)$, which in turn must be $N$.
 \end{proof}
 
We will use the notation $(\alpha)$ and $(\beta)$ to represent sequences of stacks of finite size.  Lemma~\ref{lastinfinity} implies that, when playing the game $(\alpha,\infty, \beta)$, the goal is to not be the player who eliminates one of the two sides $\alpha$ or $\beta$.  For example, from the position $(3,2,\infty,1)$, removing a stack of size $2$ or $1$ leads to an $N$ position.  It turns out that the only $P$ option is $(2,2,\infty, 1)$.  We can thus consider the game $(\alpha,\infty,\beta)$ as a disjunctive sum of the games $(\alpha)$, and $(\beta)$ with modified rules: removing the right-hand stack of $(\alpha)$ also removes the neighboring stack, and the same with the left-hand stack of $(\beta)$.  However, the goal is to force the opponent to finish one of the two games in the sum.  Thus, the game $(\alpha,\infty,\beta)$ is $N$ or $P$ as is the \textit{misere diminished disjunctive sum} of the modified games $(\alpha)$ and $(\beta)$.  By symmetry, it is clear that $(\alpha,\infty, \alpha)$ is $P$.  Other $P$ positions $(\alpha,\infty,\beta)$ occur when the modified games $(\alpha)$ and $(\beta)$ have the same \textit{foreclosed Grundy number}.  We use this fact in the proof of Theorem~\ref{6infinities}.  See \cite{onag}, chapter 14 for the theory of diminished disjunctive sums.
 
 The game $(\infty, \alpha, \infty)$ is also interesting.  A move on either infinite stack results in an $N$ position by Lemma~\ref{lastinfinity}.  Thus, again, the goal is to force the other player to move last on $\alpha$, with the modified rules that removing an end stack of $\alpha$ also removes the adjacent stack.  That is, we are playing misere \twyst on $\alpha$ with these modified rules.  Somewhat surprisingly, we can characterize all $P$ positions $(\infty, \alpha, \infty)$ when $\alpha$ has length 3 or less, as completed by the following theorem.
 
 \begin{thm}  \label{outerinfinities} Suppose $a,b,c>0$.  Then  
\begin{enumerate} \renewcommand{\labelenumi}{\textup{\arabic{enumi}.}}
 \item $(\infty,a,\infty)$ is $P$ if and only if $a=1$.
 \item $(\infty,a,b,\infty)$ is $P$ if and only if $(a,b) = (c+1,d+1)$ where $(c,d)$ is $P$ for $\wyt$.
 \item $(\infty,a,b,c,\infty)$ is $P$ if and only if $a=c>1$.
 \end{enumerate}
 \end{thm}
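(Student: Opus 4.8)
The plan is to analyze all three statements by classifying the options of each position directly, leaning on two simplifications. First, any move that reduces one of the two end stacks from $\infty$ to a finite size produces a position that ends (or, by the reflection symmetry $(\infty,a,b,c,\infty)\leftrightarrow(\infty,c,b,a,\infty)$, begins) in $\infty$, which is $N$ by Lemma~\ref{lastinfinity}; so touching an infinity is always a move to $N$, and such options can be disregarded when testing for $P$. Second, I would track the contraction rule with care: whenever a move sets a finite stack lying next to an $\infty$ to $0$, that stack's finite neighbor is absorbed into the infinity, so the position drops in length. I would prove the three parts in order and use the earlier parts when proving the later ones.

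For part~(1), the only non-infinity options of $(\infty,a,\infty)$ reduce $a$; reducing $a$ to $0$ contracts to the single stack $(\infty)$, which is $N$, so $(\infty,1,\infty)$ has all options $N$ and is $P$, while any $(\infty,a,\infty)$ with $a\ge 2$ moves to $(\infty,1,\infty)$ and is $N$. The heart of part~(2) is the observation that the options of $(\infty,a,b,\infty)$ which keep both interior stacks positive -- reduce $a$ alone, reduce $b$ alone, or reduce both by an equal amount staying positive -- are \emph{exactly} the moves of \wyt played on the pair $(a-1,b-1)$, whereas every option that zeroes an interior stack contracts through the infinity-merge to $(\infty,\infty)$ or to a reduced infinity, all of which are $N$. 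Since these collapsing options are always $N$, they never influence the $P/N$ determination, so by induction on $a+b$ the position $(\infty,a,b,\infty)$ has the same status as the \wyt position $(a-1,b-1)$; this is precisely the stated characterization in terms of $(c+1,d+1)$.

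For part~(3) I would argue by strong induction on $a+b+c$, proving both directions together and using parts~(1), (2), and reflection freely. A convenient preliminary is that from a three-interior-stack position a single move either stays three-interior-stack or, via a contraction that merges a finite stack into an $\infty$, drops straight to a part-(1) position $(\infty,x,\infty)$ -- it never produces a two-interior-stack position. For the ``only if'' direction, if $a\neq c$ (WLOG $a<c$) I exhibit a $P$ option: if $a>1$, reduce $c$ to $a$ to reach the palindrome $(\infty,a,b,a,\infty)$, which is $P$ by induction; if $a=1$, reduce $c$ to $0$, contracting to $(\infty,1,\infty)$, which is $P$ by part~(1); and if $a=c=1$, reduce one interior $1$ to $0$, again reaching $(\infty,1,\infty)$. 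For the ``if'' direction, I take $a=c>1$ and show every option is $N$: a same-length option has strictly unequal outer coordinates (the reduced side drops below the untouched side), hence is $N$ by induction, while every collapsing option contracts to a part-(1) position $(\infty,x,\infty)$ with $x=a$ or $x=2a-b$.

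The main obstacle is exactly this last bookkeeping of contractions in the ``if'' direction of part~(3). It is tempting to think that emptying a stack leaves a shorter but still two-stack position, and if such a collapse could land on a genuine part-(2) $P$ position the palindrome would fail to be $P$. What makes the theorem work is that emptying any stack adjacent to an infinity merges its finite neighbor into that infinity, so every collapse from a palindrome lands on a part-(1) position $(\infty,x,\infty)$ whose parameter is $a$ or $2a-b$; since $a>1$ and (when $b<a$) $2a-b>a$, these values always exceed $1$ and are therefore $N$ by part~(1). Confirming that no collapse can produce the value $x=1$ -- equivalently, that the only routes to a part-(1) $P$ position are the ``good'' moves used in the ``only if'' direction -- is the crux, and getting the contraction arithmetic right in each boundary case is where the care is needed.
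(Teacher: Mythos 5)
Your proposal is correct and follows essentially the same route as the paper: part (1) by direct inspection, part (2) via the shifted-\wyt correspondence with all collapsing options landing on the $N$ position $(\infty,\infty)$, and part (3) by a mutual induction in which same-length options have unequal outer coordinates and every contraction lands on a part-(1) position $(\infty,x,\infty)$ with $x>1$. Your write-up is in fact more explicit than the paper's terse argument for part (3) -- in particular, the computation that the collapse parameter is always $a$ or $2a-b$, both exceeding $1$, is exactly the bookkeeping the paper leaves implicit.
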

 
 \begin{proof}
 Part 1 is clear.  For part 2, note that $(\infty, 1, 1, \infty)$ is $P$, and reducing either stack of $(a,b)$ to zero results in $(\infty,\infty)$, which is $N$.  The result follows by induction.  For part 3, note that if either $a$ or $c$ is $1$, then the other one can be reduce to zero, leaving the $P$ position $(\infty, 1,\infty)$.  Then if $a>1$, some options of the position $(\infty,a,b,a,\infty)$  are  $N$ because $b$ has reduced to zero or one of the $a$'s has reduced to $1$ or $0$.  All other options are of the form $(\infty,a,b,c,\infty)$ where $a,c>1$ and $a\ne c$, which are $N$ by induction.
 \end{proof}
 
 There are a few easy consequences of this theorem for cases when $\alpha$ has length $4$.  Namely,
 
 \begin{cor}  Suppose $a,b>0$.  Then  
\begin{enumerate} \renewcommand{\labelenumi}{\textup{\arabic{enumi}.}}
  \item $(\infty,1,a,b,1,\infty)$ is $P$ if and only if $a=2$ and $b>2$, or vice versa.
 \item $(\infty,a,1,b,1,\infty)$ is $P$ if and only if $a=2$ and $b\ge 2$.
 \item $(\infty,a,1,1,b,\infty)$ is $P$ if and only if $a=b>1$.
  \end{enumerate}
 \end{cor}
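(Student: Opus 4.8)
The plan is to treat all three parts uniformly. Each position has the form $(\infty,\alpha,\infty)$ with $\alpha=(s_1,s_2,s_3,s_4)$ of length four, so I will determine its outcome by inspecting every option and then settling that option either by Theorem~\ref{outerinfinities} (when the option has a middle of length $\le 3$) or by induction on the total number of tiles (when the option again has a length-four middle of the same family). Two preliminary reductions make the bookkeeping manageable. First, by Lemma~\ref{lastinfinity} any move that turns one of the two bordering $\infty$'s into a finite stack produces a position beginning or ending in $\infty$, hence an $N$ position; such moves never furnish a $P$ option and may be ignored when testing whether the position is $P$. Second, for the short middles I will use, besides Theorem~\ref{outerinfinities}, the length-two and length-one facts extracted from its proof: $(\infty,c+1,d+1,\infty)$ is $P$ exactly when $(c,d)$ is $P$ for \wyt, and $(\infty,t,\infty)$ is $P$ exactly when $t=1$.

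The key point, and the one that must be handled with care, is reading off the contraction each interior move induces. When an end stack of $\alpha$ bordering an $\infty$ is reduced all the way to $0$, the resulting interior zero sits between that $\infty$ and the next stack of $\alpha$, so contraction merges this neighbor into the $\infty$ and it vanishes. Thus reducing an $\alpha$-end to $0$ deletes not one but two stacks from $\alpha$ --- exactly the ``removing an end stack also removes the adjacent stack'' phenomenon noted before Theorem~\ref{outerinfinities}. By contrast, a pair move that zeroes an \emph{interior} stack of $\alpha$ merges two finite neighbors in the ordinary way. Cataloguing the few move types for a length-four middle --- reduce $s_1$ or $s_4$ (partially, preserving length four when the end exceeds $1$, or to $0$, dropping to a length-two middle), and the three adjacent-pair moves $(s_1,s_2),(s_2,s_3),(s_3,s_4)$ (each either preserving length four or, upon creating a zero, contracting to a shorter middle) --- and computing each resulting middle is the main labor.

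With the catalogue in hand, each part follows by the standard $P/N$ test. For example, in part 3 the end-to-zero reductions and the two outer pair moves always land on middles of length $\le 3$ that Theorem~\ref{outerinfinities} and the \wyt test show to be $N$ off the diagonal, while the self-similar partial reductions of the ends drive an induction isolating $a=b>1$ as the only $P$ case; parts 1 and 2 are analogous, part 2 being genuinely asymmetric (its middle is not a palindrome), which is why it carries no ``vice versa''. The main obstacles are therefore entirely local: getting every contraction right --- especially the double deletion at an $\infty$-border --- and checking the small boundary configurations where the strict inequalities of Theorem~\ref{outerinfinities} and the membership test for \wyt $P$ positions decide the outcome, for instance distinguishing the palindromic base cases from their near neighbors. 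I expect this last boundary analysis to be the delicate step. Finally one must confirm the induction is well founded, which it is because every move preserving a length-four middle strictly decreases an interior stack and hence the total tile count.
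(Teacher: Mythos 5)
Your approach is correct and is essentially the derivation the paper intends: the paper states this corollary without proof, presenting it as an ``easy consequence'' of Theorem~\ref{outerinfinities}, and your plan --- dismissing moves on the bordering $\infty$'s via Lemma~\ref{lastinfinity}, cataloguing the interior options with the crucial double-deletion contraction (zeroing a stack adjacent to an $\infty$ absorbs its finite neighbor as well), resolving shortened middles by Theorem~\ref{outerinfinities} together with the \wyt test, and inducting on the tile count for the length-preserving moves --- is exactly the verification being left to the reader, and it goes through as you describe. Your emphasis on getting the contractions right is well placed, since that is the only subtle point (note in passing that from a length-four middle every zero-creating move lands on a middle of length at most two, never three).
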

 
 Using Theorem~\ref{equaltriples}, we found that $\infty^3= (\infty, \infty, \infty)$ is $P$, and thus $\infty^4$ and $\infty^5$ are $N$.  It may come as a slight surprise that $\infty^6$ is also $N$, because it has the following $P$ option.
 
 \begin{thm} \label{6infinities}
 The position $(\infty, \infty, \infty, \infty, \infty, \infty)$ is $N$.
 \end{thm}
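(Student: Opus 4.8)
The plan is to prove that $(\infty,\infty,\infty,\infty,\infty,\infty)$ is $N$ by exhibiting a single move to a $P$ position. A quick inventory shows how constrained the moves are: from $\infty^6$ one may only reduce an end stack to a finite size, reduce one neighboring pair $(\infty,\infty)$ to a finite pair $(a,a)$, or collapse a pair to $(0,0)$ and contract. Reducing an end to $0$ yields $\infty^5$, and collapsing any interior pair to $(0,0)$ yields $\infty^4$ after contraction; both are already known to be $N$, so the only candidates for a $P$ option are the length-six positions obtained by reducing one consecutive pair of infinities to a \emph{positive} finite pair. The natural candidate to try first is the symmetric position $P_a=(\infty,\infty,a,a,\infty,\infty)$ obtained by reducing the central pair, and I would argue that $P_a$ is $P$ for a suitably chosen finite $a$. (Should the symmetric choice not balance, the fallback is an asymmetric reduction whose two flanks are forced to share a foreclosed Grundy number, exactly the phenomenon advertised before Theorem~\ref{outerinfinities}.)

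To show $P_a$ is $P$ I would verify that every option is $N$, organizing the casework with the tools of the previous section. The outer stacks (positions $1$ and $6$) are terminal infinities, so any move reducing them, or reducing a pair that meets them, leaves a finite block abutting an end infinity; such positions are $N$ by Lemma~\ref{lastinfinity}. Because $P_a$ is a palindrome, its remaining options occur in reversal-paired classes, which halves the work, and the subpositions that survive once enough infinities are consumed can be read as $(\alpha,\infty,\beta)$ and classified through the misere diminished disjunctive sum picture, matching foreclosed Grundy numbers of the two flanks. The short central blocks that appear are handled by Theorem~\ref{outerinfinities}, and all-equal triples by Theorem~\ref{equaltriples}; together these furnish the base cases for an induction on the total number of finite tiles present.

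The genuinely delicate options are the coupled central moves, which reduce $(a,a)$ to $(a',a')$ and therefore \emph{preserve} the palindromic symmetry; these are not disjunctive-sum moves, so the naive mirroring strategy for the previous player fails precisely here. This is the step where the value of $a$ must be pinned down: I expect $P_a$ to be $P$ only for $a$ in a specific Grundy/parity class, chosen so that the foreclosed Grundy numbers of the two flanks remain equal after a coupled reduction, and this is where the theory of diminished disjunctive sums (\cite{onag}, ch.~14) does real work rather than bookkeeping. I would single out the correct $a$ first and prove, within the same induction, that every coupled descendant $P_{a'}$ with $a'<a$ is $N$, so that the symmetry-preserving options also move out of the $P$ set. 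The main obstacle, in short, is this coupled-move analysis: one must extend the foreclosed-Grundy/misere criterion to flanks that themselves still contain infinities and show the balance condition is maintained under the coupled move and not merely under independent moves on the two sides.
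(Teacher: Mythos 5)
Your overall route is the paper's route: the paper also exhibits the $P$ option by reducing the central pair, namely $(\infty,\infty,1,1,\infty,\infty)$, and also leans on Lemma~\ref{lastinfinity}, Theorem~\ref{outerinfinities}, and the foreclosed-Grundy/diminished-sum machinery. But as written your plan has two genuine gaps. First, you never pin down $a$, and you locate the difficulty in the wrong place. With the correct choice $a=1$, the ``delicate coupled central moves'' you flag as the main obstacle are trivial: the only symmetry-preserving reduction of $(1,1)$ is to $(0,0)$, which contracts to $\infty^4$, already known to be $N$. The diminished-disjunctive-sum theory is \emph{not} needed there; in the paper it does its real work on the flank reductions $(\infty,\infty,1,1,a,a)$ for $a>2$, where one moves to $(b,\infty,1,1,a,a)$ with $b$ the misere foreclosed Grundy value of the modified game on $(1,1,a,a)$, while the small cases $a=0,1,2$ are dispatched by exhibiting the $P$ options $(1,1,2,2)$, $(1,1,1,1,1,1)$, $(2,2,1,1)$ via Theorem~\ref{general}. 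Your plan for a general $a$ would face exactly the coupled-move balance problem you describe, with no proposed resolution; choosing $a=1$ dissolves it.

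Second, your disposal of the outer-stack reductions is incorrect: reducing an end infinity of $(\infty,\infty,1,1,\infty,\infty)$ to a finite $c$ yields $(\infty,\infty,1,1,\infty,c)$, which still contains several infinite stacks, so Lemma~\ref{lastinfinity} (which requires all stacks but the last to be finite) does not apply. This case needs its own argument, and it is the heart of the paper's proof: the Claim that $(c,\infty,1,1,\infty,c)$ is $P$ for all $c\ge 0$, proved by induction on $c$, with base case from Theorem~\ref{outerinfinities} and with the one non-inductive option $(c,\infty,\infty,c)$ shown to be $N$ via its $P$ option $(c,b,b,c)$ supplied by Theorem~\ref{4symmetric}. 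Nothing in your proposal substitutes for this Claim, so the end-stack case of your verification would not close as sketched.
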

 
 \begin{proof}
 We will show that the option $(\infty, \infty, 1, 1, \infty, \infty)$ is $P$.  
 We have already seen that the option (by removing either or both 1's) $\infty^4$ is $N$.  For any $a$, $(\infty, \infty, 1, 1, \infty, a)$ is $N$ because 
 
 \textbf{Claim:} $(a, \infty, 1, 1, \infty, a)$ is $P$.  
 
 To prove the claim, note that when $a=0$, it is implied by Theorem~\ref{outerinfinities}.  For positive $a$, there are options of the form $(a, \infty, 1, 1, \infty, a-k)$, which are $N$ by induction because they have the $P$ option $(a-k, \infty, 1, 1, \infty, a-k)$.  The only other option is $(a,\infty, \infty, a)$, which has a $P$ option $(a,b,b,a)$ by Theorem~\ref{4symmetric}.   Thus, the claim is proved.
 
 The remaining options involve decreasing two infinite stacks.  Reducing them to $0$, $1$, or $2$ are $N$ positions because they have the options $(2,2,1,1)$, $(1,1,1,1,1,1)$, and $(1,1,2,2)$ respectively, all of which are $P$ by Theorem~\ref{general}.  For $a>2$, the position $(\infty, \infty, 1, 1, a, a)$ has a $P$ option $(b, \infty, 1, 1, a, a)$ for $b$ which is the misere foreclosed Grundy value for the modified game on $(1,1,a,a)$, as discussed after Lemma~\ref{lastinfinity}.  
 \end{proof}
 
 We conclude with the natural next step. 
 
 \begin{openp}
 Is $(\infty,\infty,\infty,\infty,\infty,\infty, \infty)$ $P$ or $N$?
 \end{openp}

\bibliographystyle{plain}
\bibliography{wytbib}

\end{document}